\newtheorem{theorem}{Theorem}[section]
\newtheorem{proposition}[theorem]{Proposition}
\newtheorem{lemma}[theorem]{Lemma}
\newtheorem{corollary}[theorem]{Corollary}
\newtheorem*{TheoremB}{Theorem B}
\newtheorem*{TheoremA}{Theorem A}
\newtheorem*{conjecture}{Conjecture}
\theoremstyle{definition}
\newtheorem{definition}[theorem]{Definition}
\newtheorem{example}[theorem]{Example}
\theoremstyle{remark}
\newtheorem{remark}[theorem]{Remark}
\numberwithin{equation}{section}
\begin{document}

\title[Nevanlinna counting functions and pull-back measures]{Nevanlinna counting functions and pull-back measures on maximal ideal space of $H^\infty$}

\author[Y.X. Gao, Y. Liang, and Z.H. Zhou] {Yong-Xin Gao, Yuxia Liang, and Ze-Hua Zhou*}

\address{\newline  Yong-Xin Gao\newline  School of Mathematical Sciences and LPMC, Nankai University, Tianjin 300071, P.R. China.}
\email{ygao@nankai.edu.cn}

\address{\newline  Yuxia Liang\newline  School of Mathematical Sciences, Tianjin Normal University, Tianjin 300387, P.R. China.}
\email{liangyx1986@126.com}

\address{\newline Ze-Hua Zhou\newline School of Mathematics, Tianjin University, Tianjin 300354, P.R. China.}
\email{zehuazhoumath@aliyun.com; zhzhou@tju.edu.cn}

\keywords{Nevanlinna counting function, pull-back measure, maximal ideal space, composition operator}
\subjclass[2010]{Primary 47B33; Secondary 30H05, 46J15.}

\date{}
\thanks{\noindent $^*$Corresponding author.\\
This work is supported in part by the National Natural Science
Foundation of China (Grant Nos. 12001293, 11701422, 12171353).}

\begin{abstract}
 In this paper we give precise characterizations of the relation between the Nevanlinna counting function and pull-back measure of an analytic self-map of the unit disk near the boundary. We show that it is quite worth considering these two concepts on the maximal ideal space of the bounded analytic functions.
\end{abstract} \maketitle

\section{introduction}

For a given analytic self-map $\varphi$ of the unit disk $D$, the Nevanlinna counting function $N_\varphi$ is defined as $$N_\varphi(a)=\sum_{\varphi(z)=a}\log\frac{1}{|z|}$$
for $a\in\varphi(D)$. If $a\in D\backslash\varphi(D)$ just define $N_\varphi(a)=0$. This function has its origin in value distribution theory, and plays an important role in the study of the composition operators, see \cite{Cowen} for example.

Another concept this paper concerns with is the pull-back measure $\mu_\varphi$ of an analytic self-map $\varphi$.  It can be defined as a Borel measure on $\mathfrak{M}_{\infty}$. Here $\mathfrak{M}_{\infty}$ denotes the maximal ideal space of the Banach algebra of bounded analytic functions on $D$. The definition of $\mu_\varphi$ is given in Section 3.

There is a strong connection between the Nevanlinna counting function and pull-back measure of $\varphi$. As a typical example, let's recall some results on the properties of the composition operator $C_\varphi$ acting on Hardy space $H^2$. All of these properties can be characterized in terms of either $N_\varphi$ or $\mu_\varphi$.

(a). It is well known that $C_\varphi$ is always bounded on $H^2$. So we can find a constant $c>0$ such that
\begin{align}\label{littlewood}
\int_0^{2\pi}|f\comp\varphi(r\text{e}^{i\theta})|^2d\theta\leqslant c\int_0^{2\pi}|f(r\text{e}^{i\theta})|^2d\theta
\end{align}
for any $f\in H^2$. The famous Carleson embedding theorem, see \cite{Car}, states that (\ref{littlewood}) is equivalent to the following: there exists a constant $c'>0$ such that
$$\mu_\varphi(\hat{W}(\zeta,h))\leqslant c'\cdot h$$
for any $|\zeta|=1$ and $h>0$, where $\hat{W}(\zeta,h)$ denotes the closure of the Carleson window $$W(\zeta,h)=\{r\text{e}^{i\theta}\in D : 1-h<r<1 \text\quad\text{and}\quad|\text{e}^{i\theta}-\zeta|<h\}$$
in $\mathfrak{M}_{\infty}$. From another viewpoint, according to the Littelwood-Paley identity:
$$\|f\comp\varphi\|_2^2=|f(\varphi(0))|^2+2\int_D|f'(w)|^2N_\varphi(w)dv(w),$$
the boundedness of  $C_\varphi$ can also be derived from the fact that the function $\frac{N_\varphi(a)}{-\log|a|}$ is bounded as $|a|\to1$.

(b). It is proved in \cite{Nina} that the composition operator $C_\varphi$ on $H^2$ has closed range if and only if we can find constants $\delta>0$ and $C>0$ such that
$$v(T_\delta\cap W(\zeta,h))\geqslant C\cdot h^2$$
for all $\zeta\in\partial D$ and $h>0$, where $T_\delta=\{z\in D : \frac{N_\varphi(z)}{-\log|z|}>\delta\}$ and $v$ is the normalized Lebesgue measure on $D$.
Whereas a much earlier result in \cite{CLRA} shows that $C_\varphi$ has closed range if and only if there is a constant $C'>0$ such that
$$\mu_\varphi(\hat{F})\geqslant C'\cdot \frac{d\theta}{2\pi}(F)$$
for any Borel subset $F$ of $\partial D$. Here the meaning of $\hat{F}$ is illustrated in our Section 2.

(c). The compactness of a composition operator can also be characterized by using either the Nevanlinna counting function or pull-back measure. In \cite{Sho} Shapiro proves that $C_\varphi$ is compact on $H^2$ if and only if
$$\lim_{|a|\to 1}\frac{N_\varphi(a)}{-\log|a|}=0.$$
While in \cite{Mac} MacCluer shows that the compactness of $C_\varphi$ is also equivalent to the vanishing Carleson measure condition of $\mu_\varphi$, that is,
$$\mu_\varphi(\hat{W}(\zeta,h))=\text{o}(h)$$
as $h\to 0$ uniformly in $\zeta\in\partial D$.

Based on the above, it would be quite interesting to investigate the connection between the Nevanlinna counting function $N_\varphi$ and pull-back measure $\mu_\varphi$ directly without the composition operator $C_\varphi$ as a bridge.

In 2011, Lef\`{e}vre etc \cite{MathA} revealed some relations between these two concepts near the boundary of $D$. They proved that the maximal Nevanlinna counting function and pull-back measure can be dominated by each other up to constants near the boundary. Precisely speaking, one has the following Theorem A, which is the combination of Theorems 3.1 and 4.1 in \cite{MathA}.

\begin{TheoremA}
Suppose $\varphi$ is an analytic self-map of $D$. Then for $\zeta\in\partial D$ and $0<h<(1-|\varphi(0)|)/4$ one has
$$\frac{1}{196}\sup_{W(\zeta,h/24)}N_\varphi\leqslant\mu_\varphi(\hat{W}(\zeta,h))\leqslant64\sup_{W(\zeta,64h)}N_\varphi.$$
\end{TheoremA}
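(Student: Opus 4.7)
The plan is to prove the upper and lower bounds by separate arguments, both built from two main tools: the Schwarz--Pick inequality (to bound the image of a pseudohyperbolic disk by a pseudohyperbolic disk of the same radius), and the sub-mean value property of $N_\varphi$ on any Euclidean disk in $D$ that avoids $\varphi(0)$, namely $N_\varphi(w)\leq|\Delta|^{-1}\int_\Delta N_\varphi\,dA$ for $w$ the center of $\Delta$. The hypothesis $h<(1-|\varphi(0)|)/4$ ensures that $\varphi(0)$ stays away from every Carleson window $W(\zeta,Ch)$ with $C$ of moderate size, which is what allows the sub-mean value property to be invoked freely and keeps any auxiliary error terms of the form $\log|\varphi(0)-w|$ uniformly bounded.

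For the lower bound $\frac{1}{196}\sup_{W(\zeta,h/24)}N_\varphi\leq\mu_\varphi(\hat W(\zeta,h))$, I would pick $w_0\in W(\zeta,h/24)$ essentially attaining the supremum, enumerate its preimages $\{z_j\}$ so that $N_\varphi(w_0)=\sum_j\log(1/|z_j|)$, and produce boundary mass via Schwarz--Pick. On each pseudohyperbolic disk $\Delta_\rho(z_j)$ of absolute radius $\rho$, Schwarz--Pick forces $\varphi(\Delta_\rho(z_j))\subset\Delta_\rho(w_0)$; choosing $\rho$ small enough (in terms of the ratio $1/24$) keeps $\Delta_\rho(w_0)$ inside $W(\zeta,h)$. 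The boundary trace of each $\Delta_\rho(z_j)$ on $\partial D$ is an arc of length comparable to $1-|z_j|$ on which the boundary values $\varphi^{*}$ lie in $\overline{W(\zeta,h)}$. After a greedy disjointification of the $\Delta_\rho(z_j)$, summing gives $\mu_\varphi(\hat W(\zeta,h))\gtrsim\sum_j(1-|z_j|)\gtrsim\sum_j\log(1/|z_j|)=N_\varphi(w_0)$, where the second comparison uses that only preimages $z_j$ with $|z_j|$ close to $1$ contribute non-trivially (the few $z_j$ with $|z_j|$ bounded away from $1$ are controlled by Littlewood's inequality and absorbed into the constant). The factor $1/196$ then accounts for every multiplicative loss in these geometric comparisons.

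For the upper bound $\mu_\varphi(\hat W(\zeta,h))\leq 64\sup_{W(\zeta,64h)}N_\varphi$, the strategy is dual and more delicate: instead of pushing interior data to the boundary, one must lift boundary mass back to interior counting. For almost every $\theta$ with $\varphi^{*}(e^{i\theta})\in\overline{W(\zeta,h)}$, the radial approach $\varphi(re^{i\theta})$ lies in a slightly enlarged Carleson window for $r$ close to $1$, and therefore the radial trajectory contributes to $N_\varphi$ at interior points of $W(\zeta,Ch)$. Quantifying this uniformly requires combining Fubini on the boundary with the sub-mean value property of $N_\varphi$ to pass from the radial average to a genuine area average of $N_\varphi$ over a region that, together with the enlargement necessary to sit inside $W(\zeta,64h)$ while avoiding $\varphi(0)$, accounts for the constant $64$. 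The area average is then dominated by the supremum of $N_\varphi$ on $W(\zeta,64h)$, and rearrangement gives the claim.

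The main obstacle is the upper bound. A naive attempt via the Jensen identity $N_\varphi(w)=\int_{\overline D}\log|u-w|\,d\mu_\varphi(u)-\log|\varphi(0)-w|$ (obtained by applying Jensen to $\varphi-w$ on $D(0,r)$ and letting $r\to 1^-$) yields only logarithmic-in-$h$ losses, incompatible with the linear scaling of the target inequality; circumventing this forces one either to differentiate in $w$, converting the logarithmic kernel into an essentially $|u-w|^{-1}$ kernel with the correct scaling, or to argue via the covering and sub-mean scheme sketched above. A secondary but substantial difficulty is the careful propagation of the explicit constants $24$, $64$, and $196$ through the several rounds of comparison between Euclidean disks, pseudohyperbolic disks, and Carleson windows.
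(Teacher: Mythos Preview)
Your lower-bound argument has a genuine gap. You claim that ``the boundary trace of each $\Delta_\rho(z_j)$ on $\partial D$ is an arc of length comparable to $1-|z_j|$ on which the boundary values $\varphi^*$ lie in $\overline{W(\zeta,h)}$.'' But a pseudohyperbolic disk $\Delta_\rho(z_j)$ with $\rho<1$ is compactly contained in $D$ and meets $\partial D$ not at all. Schwarz--Pick gives only the \emph{interior} inclusion $\varphi(\Delta_\rho(z_j))\subset\Delta_\rho(w_0)$; it says nothing about the radial limits $\varphi^*$ on any boundary arc, and $\mu_\varphi$ is defined precisely in terms of $\varphi^*$. Nothing in your outline converts the interior counting data $\sum_j\log(1/|z_j|)$ into boundary mass. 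The obstruction is structural: keeping $\Delta_\rho(w_0)\subset W(\zeta,h)$ forces $\rho$ small, while getting $\Delta_\rho(z_j)$ anywhere near $\partial D$ would require $\rho\to 1$. Your upper-bound sketch is closer to viable but remains vague at the decisive step of turning ``radial trajectories contribute to $N_\varphi$'' into an inequality with the correct linear scaling in $h$; the Jensen-identity obstacle you yourself flag is exactly the difficulty, and you have not said how you intend to get around it.

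The paper's route is entirely different from both halves of your plan. It neither tracks individual preimages nor follows radial trajectories. Instead it applies Stanton's formula with test functions of the form $H(|f|)$, $f$ linear fractional, to obtain \emph{exact} integral identities: Theorem~\ref{coro1} gives
\[
\int_{S(\zeta,h)}N_\varphi\,dv=\int_0^h\frac{h^2-u^2}{u}\,\mu_\varphi\big(\hat S(\zeta,u)\big)\,du,
\]
and Theorem~\ref{aver2} gives an analogous identity for $N_\varphi/(1-|z|^2)$ along the level curves $L_F(\alpha)$. Both inequalities of Theorem~A then fall out as one-line corollaries (the corollary following Theorem~\ref{coro1} for the upper bound, Remark~\ref{zh} for the lower), with the sub-mean value property of $N_\varphi$ the only additional ingredient. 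The payoff is that the two directions are handled symmetrically from the same identity, and the constants improve substantially over those in the original reference.
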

Later in 2016, \cite{new} gave another proof of this theorem.

With a little more efforts, \cite{MathA} also got an improvement of Theorem A: the pull-back measure of a Carleson window can actually be controlled by the average of $N_\varphi$ on a larger Carleson window. Let $S(\zeta,h)=\{z\in D : |z-\zeta|<h\},$ and $\hat{S}(\zeta,h)$ be the closure of $S(\zeta,h)$ in $\mathfrak{M}_{\infty}$. Recall that $S(\zeta,h)$ are equivalent choices of the Carleson windows $W(\zeta,h)$. The following Theoren B is Theorem 4.2 in \cite{MathA}:

\begin{TheoremB} Suppose $\varphi$ is an analytic self-map of $D$. Then for $\zeta\in\partial D$ and $0<h<(1-|\varphi(0)|)/8$ one has
$$\mu_\varphi(\hat{S}(\zeta,h))\leqslant\frac{128\times17^2}{v(S(\zeta,17h))}\int_{S(\zeta,17h)}N_\varphi(z) dv(z).$$
\end{TheoremB}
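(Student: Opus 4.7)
The plan is to combine Theorem A with the subharmonicity of the Nevanlinna counting function $N_\varphi$ on $D\setminus\{\varphi(0)\}$, so as to promote the pointwise supremum appearing in Theorem A into an $L^1$ average over a slightly enlarged Carleson window.

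First, I would restate Theorem A in terms of $\hat S$-windows. The elementary inclusions $W(\zeta,h/2)\subset S(\zeta,h)\subset W(\zeta,2h)$, applied to both sides of Theorem A with the doubled parameter $2h$ (which is legal since the hypothesis $h<(1-|\varphi(0)|)/8$ gives $2h<(1-|\varphi(0)|)/4$), yield
\[
\mu_\varphi(\hat S(\zeta,h))\leq C_1\sup_{z\in S(\zeta,Kh)}N_\varphi(z)
\]
for absolute constants $C_1$ and $K$.

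Next, because $N_\varphi$ is locally a finite sum of harmonic functions of the form $\log(1/|\varphi_j^{-1}(\cdot)|)$ (away from the critical values of $\varphi$), it is subharmonic on $D\setminus\{\varphi(0)\}$; hence the sub-mean-value inequality
\[
N_\varphi(z^*)\leq\frac{1}{v(\Delta)}\int_{\Delta}N_\varphi(z)\,dv(z)
\]
holds for any Euclidean disk $\Delta\subset D\setminus\{\varphi(0)\}$ containing $z^*$. I would pick $z^*\in S(\zeta,Kh)$ nearly achieving the supremum from step one, and then construct a disk $\Delta$ satisfying (i) $z^*\in\overline\Delta$, (ii) $\Delta\subset S(\zeta,17h)$, (iii) $\Delta\cap\{\varphi(0)\}=\emptyset$, and (iv) $v(\Delta)\geq c_0 h^2$ for some absolute $c_0>0$. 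Combining these ingredients yields
\[
\mu_\varphi(\hat S(\zeta,h))\leq\frac{C_2}{v(S(\zeta,17h))}\int_{S(\zeta,17h)}N_\varphi(z)\,dv(z),
\]
which is the content of Theorem B up to constant optimization.

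The main obstacle is the geometric step of constructing $\Delta$. When $z^*$ lies very close to $\partial D$, any Euclidean disk centered at $z^*$ and contained in $D$ is forced to have radius at most $1-|z^*|$, which may be arbitrarily small compared to $h$; thus $v(\Delta)$ cannot be made $\asymp h^2$ if we insist on centering $\Delta$ at $z^*$. The remedy is to place $z^*$ on the boundary of $\Delta$ rather than at its center and to choose the radius of $\Delta$ proportional to $h$ itself; the hypothesis $h<(1-|\varphi(0)|)/8$ is used once more here to leave room for $\Delta$ to avoid $\varphi(0)$. Verifying that such a $\Delta$ still fits inside $S(\zeta,17h)$ is what forces the enlargement factor $17$, with the precise numerical constant $128\times 17^2$ emerging from a careful accounting of the absolute constants $C_1$, $K$, $c_0$. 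I would also be prepared to inline the strategy directly into the proof of Theorem A—applying the sub-mean-value inequality at an intermediate step rather than after the fact—should the two-step reduction above not yield constants tight enough to match the stated bound.
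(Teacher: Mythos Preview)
Your argument has a genuine gap at the sub-mean-value step. The inequality
\[
N_\varphi(z^*)\leq\frac{1}{v(\Delta)}\int_{\Delta}N_\varphi\,dv
\]
holds for a subharmonic function only when $z^*$ is the \emph{center} of $\Delta$; it is false for an arbitrary disk containing $z^*$ (take $u(z)=|z|^2$, $z^*$ near the boundary of $\Delta$). Your proposed remedy---moving $z^*$ to $\partial\Delta$---therefore does not rescue the inequality at all; it merely addresses the geometric constraint of keeping $\Delta\subset D$, while leaving the analytic inequality unjustified. Since the near-maximizer $z^*$ of $N_\varphi$ over $S(\zeta,Kh)$ may well sit at distance $\ll h$ from $\partial D$, any disk centered at $z^*$ and contained in $D$ has area $\ll h^2$, and the sub-mean-value bound then carries a factor $(1-|z^*|)^{-2}$ that you cannot absorb. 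To salvage a Theorem~A--plus--subharmonicity route you would need an additional mechanism (e.g.\ Littlewood's inequality $N_\varphi(w)\le-\log|\psi_{\varphi(0)}(w)|$ to show that a near-maximizer with large value cannot be too close to $\partial D$, or a covering/averaging argument), none of which appears in your outline.

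The paper takes an entirely different route and never passes through Theorem~A. Using Stanton's formula with the linear fractional map $f(z)=z-\zeta$, it obtains the exact identity (Theorem~\ref{coro1})
\[
\int_{S(\zeta,h)}N_\varphi\,dv=\int_0^h\frac{h^2-u^2}{u}\,\mu_\varphi(\hat S(\zeta,u))\,du,
\]
from which Theorem~B drops out by the trivial monotonicity estimate
\[
\int_{S(\zeta,2h)}N_\varphi\,dv\ge\int_h^{2h}\frac{4h^2-u^2}{u}\,\mu_\varphi(\hat S(\zeta,u))\,du\ge\mu_\varphi(\hat S(\zeta,h))\int_h^{2h}\frac{4h^2-u^2}{u}\,du,
\]
yielding the much sharper constant $2$ over the window $S(\zeta,2h)$ rather than $128\times17^2$ over $S(\zeta,17h)$. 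The exact formula is the real content here; your approach, even if the gap were fixed, would at best recover the coarser constants of the original Lef\`evre--Li--Queff\'elec--Rodr\'iguez-Piazza argument.
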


In this paper, we will give a precise equality relating this two concepts on Carleson windows. As corollaries, we can get Theorem A and Theorem B easily, with much better constants.

However, we are not content with these results. From the composition operator's properties (a) - (c) above, we can see that it should be much more important to relate the pull-back measure of $\varphi$ to the function $\frac{N_\varphi(z)}{-\log|z|}$ other than $N_\varphi(z)$. This is another main aim of this paper. We will prove that the average of $\frac{N_\varphi(z)}{-\log|z|}$ on a Carleson window equals to the average of the pull-back measures of some other Carleson windows. As a consequence, we can show that the pull-back measure and the function $\frac{N_\varphi(z)}{-\log|z|}$ can be dominated by each other up to constants.

This paper is organized as follows. In Section 3, we will introduce the definition of the pull-back measure $\mu_\varphi$ on $\mathfrak{M}_{\infty}$, and prove some properties of it. Then in Section 4 we study the relationship between $N_\varphi$ and $\mu_\varphi$ near the boundary. We will give a precise version of Theorem B in this section, which is Theorem \ref{coro1}. Finally in Section 5 we will investigate the connection between $\frac{N_\varphi(z)}{-\log|z|}$ and $\mu_\varphi$. Then we will present some corollaries and applications. The main result of this section is Theorem \ref{aver2}.

\section{preliminaries}

First of all, let's introduce some notations used throughout this paper.

Let $H^\infty$ be the space of bounded analytic functions on the unit disk $D$, and $L^\infty(\partial D)$ be the space of $L^\infty$ functions on the unit circle $\partial D$. Both of the spaces are equipped with the supremum norm $\|\cdot\|_\infty$, making them to be Banach algebras.

The \textit{maximal ideal space} of a Banach algebra is the collection of all non-zero multiplicative linear functionals on it, equipping with the weak * topology. We use $\mathfrak{M}_{\infty}$ to denote the maximal ideal space of $H^\infty$. By Corona Theorem, $\mathfrak{M}_{\infty}$ is a compact Hausdorff space that contains $D$ as a dense subset.

The maximal ideal space of $L^\infty(\partial D)$ will be denoted by $\mathfrak{M}_{L^{\infty}}$. It is well known that $\mathfrak{M}_{L^{\infty}}$ is exactly the Shilov boundary of $\mathfrak{M}_{\infty}$, see Theorem 1.7 in Chapter \uppercase\expandafter{\romannumeral5} of \cite{BAF} for example.

The \textit{Gelfand transform} of a function $f\in H^\infty$, denoted by $\hat{f}$, is defined as
$$\hat{f}(m)=m(f) \quad, \quad m\in\mathfrak{M}_\infty.$$
It is a continuous function on $\mathfrak{M}_{\infty}$ and coincides with $f$ on $D$. Therefore, by identifying a function $f$ with its Gelfand transform $\hat{f}$, we can regard $H^\infty$ as a subalgebra of $C(\mathfrak{M}_\infty)$. The Gelfand transform on $L^\infty(\partial D)$ is defined in a similar way.

For each $m\in\mathfrak{M}_\infty$ there exists a unique positive Borel measure $\sigma_m$ on $\mathfrak{M}_{L^\infty}$ such that $\sigma_m(\mathfrak{M}_{L^\infty})=1$ and
$$\hat{f}(m)=\int_{\mathfrak{M}_{L^\infty}}\hat{f}d\sigma_m$$
for all $f\in H^\infty$. $\sigma_m$ is called the \textit{representing measure for $m\in\mathfrak{M}_\infty$}. In the rest part of this paper, we will denote the representing measure for the point $0$ briefly by $\sigma$. By the Riesz representation theorem, $\sigma$ is just the measure determined by the linear functional $$g\mapsto\int_0^{2\pi}g(\text{e}^{i\theta})\frac{d\theta}{2\pi}$$ for $g\in L^\infty(\partial D)=C(\mathfrak{M}_{L^\infty})$.

For any Borel subset $F \subset \partial D$, let $\chi_F \in L^{\infty}(\partial D)$ be the characteristic function of $F$. Then $\hat{\chi}_F$ is a continuous fuction on $\mathfrak{M}_{L^{\infty}}$ that assumes only the value $0$ and $1$. Define $$\hat{F}=\{m\in\mathfrak{M}_{L^{\infty}} : \hat{\chi}_F(m)=1\},$$ then it is easy to see that $\hat{F}$ is a clopen subset in $\mathfrak{M}_{L^{\infty}}$, and $\sigma(\hat{F})=\frac{d\theta}{2\pi}(F)$.

\begin{remark}
Although $\hat{\chi}_F$ is defined on $\mathfrak{M}_{L^{\infty}}$, in what follows we will always identify $\hat{\chi}_F$ with its Poisson integral, which is a function on $\mathfrak{M}_{\infty}$. That is,
$$\hat{\chi}_F(m)=\int_{\mathfrak{M}_{L^{\infty}}}\hat{\chi}_Fd\sigma_m$$
for each $m\in\mathfrak{M}_{\infty}$. It is easy to see that $\hat{\chi}_F$ is continuous on $\mathfrak{M}_{\infty}$ and harmonic on $D$. In fact, by Theorem 4.5 in Chapter \uppercase\expandafter{\romannumeral2} of \cite{BAF}, there exists a function $h$ invertible in $H^\infty$ such that $\chi_F=\log|h|$ almost everywhere on $\partial D$. Then on $\mathfrak{M}_{\infty}$ we have $\hat{\chi}_F$ equals to $\log|\hat{h}|$.
\end{remark}

For each $\alpha\in(0,1)$, we set $G_F (\alpha) = \{m \in \mathfrak{M}_{\infty} : \hat{\chi}_F (m) \geqslant\alpha\}$, and $L_F(\alpha)=\{z\in D : \hat{\chi}_F(z)=\alpha\}$.

\begin{lemma}\label{geo}
Suppose F is an arc on $\partial D$, then $z\in G_F (\alpha)\cap D$ if and only if $z$ lies in the disc (or half plane) intersect with $\partial D$ in $F$, and at an angle of $\pi (1-\alpha)$.
\end{lemma}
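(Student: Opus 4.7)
The plan is to reduce the statement to a computation of super-level sets of a harmonic function, then use a conformal normalization. By the remark preceding the lemma, $\hat\chi_F$ restricted to $D$ coincides with the Poisson integral $P[\chi_F]$, which is bounded and harmonic on $D$ with non-tangential boundary values $\chi_F$ a.e.\ on $\partial D$. So the task reduces to identifying
$$\{z\in D : P[\chi_F](z)\geqslant\alpha\}$$
geometrically.

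Let $a,b$ be the endpoints of the arc $F$, and choose a Möbius map $\Phi:D\to\mathbb H$ onto the upper half plane with $\Phi(a)=0$ and $\Phi(b)=\infty$, orientation chosen so that $\Phi(F)$ is the positive real axis $(0,\infty)$. Because $\Phi$ is conformal, $u:=P[\chi_F]\circ\Phi^{-1}$ is a bounded harmonic function on $\mathbb H$ with boundary values $\chi_{(0,\infty)}$. In polar coordinates $w=\rho e^{i\psi}$ with $\psi\in(0,\pi)$, the classical formula for the harmonic measure of the positive axis gives
$$u(w)=1-\frac{\psi}{\pi}.$$
Therefore $\{u\geqslant\alpha\}$ is exactly the closed sector
$$\Sigma_\alpha=\{w\in\mathbb H : 0<\arg w\leqslant\pi(1-\alpha)\},$$
whose boundary consists of the ray $\Phi(F)=(0,\infty)$ and the ray $\ell_\alpha:=\{\arg w=\pi(1-\alpha)\}$, meeting at $0$ at angle $\pi(1-\alpha)$.

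Now I transport back under $\Phi^{-1}$. Since Möbius maps send generalized circles to generalized circles and are conformal at interior points (and angle-preserving at boundary points where defined), the ray $\ell_\alpha$ pulls back to an arc of a generalized circle joining $a$ and $b$ inside $\overline D$, meeting $\partial D$ at $a$ (and, by symmetry, at $b$) at angle $\pi(1-\alpha)$ with the arc $F$. The preimage of the sector $\Sigma_\alpha$ is the region in $D$ bounded by $F$ and this arc, i.e.\ the intersection with $D$ of the disc or half-plane whose bounding generalized circle passes through $a$ and $b$ at angle $\pi(1-\alpha)$ to $\partial D$; the case of a straight line (half-plane) arises precisely when that generalized circle is the line through $a$ and $b$, which corresponds to a specific $\alpha$ depending on the length of $F$. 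This is exactly the description claimed in the lemma.

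The only subtlety I expect is bookkeeping of the angle convention — which side of $\partial D$ the angle $\pi(1-\alpha)$ is measured on, and consistency between $a$ and $b$ — but once one fixes that $\Phi(F)=(0,\infty)$ rather than $(-\infty,0)$, conformality of $\Phi^{-1}$ at the boundary points transports the $\pi(1-\alpha)$ angle at $0$ directly to the $\pi(1-\alpha)$ angle between the level curve and $F$ at $a$, so the geometric picture is forced.
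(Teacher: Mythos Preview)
Your proof is correct. Both arguments ultimately identify the level sets of $P[\chi_F]$ with circular arcs through the endpoints of $F$, but the routes differ: the paper computes the Poisson integral of $\chi_F$ directly in the disk, obtaining the closed form
\[
P[\chi_F](z)=\frac{1}{\pi}\arg\frac{-e^{-i\theta_0}-z}{e^{i\theta_0}-z}+\frac{\theta_0}{\pi}-\frac{1}{2},
\]
and then reads off the super-level set from the inscribed-angle description of $\{\arg\frac{-e^{-i\theta_0}-z}{e^{i\theta_0}-z}\geqslant\text{const}\}$. You instead invoke conformal invariance of harmonic measure, transport the problem to the upper half-plane, use the elementary formula $u(\rho e^{i\psi})=1-\psi/\pi$ for the harmonic measure of the positive axis, and then pull the sector back. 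The two are very close in spirit --- the paper's linear fractional expression $(-e^{-i\theta_0}-z)/(e^{i\theta_0}-z)$ is precisely a M\"obius map sending the endpoints of $F$ to $0$ and $\infty$, so the direct integral is really computing the same conformal change of variables implicitly. Your version avoids the integral evaluation entirely and makes the geometry transparent; the paper's version yields an explicit formula in disk coordinates that is reused later (e.g.\ in Lemma~\ref{lemma1}). Your caveat about the angle at $b=\Phi^{-1}(\infty)$ is well placed; it is easily handled either by the symmetry you mention or by noting that a circle through $a,b$ meeting $\partial D$ at angle $\theta$ at one endpoint meets it at the same angle at the other.
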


\begin{proof}
Without loss of generality, we may assume that $F=\{\text{e}^{i\theta} : \theta_0<\theta<\pi-\theta_0\},$ where $\theta_0\in (-\frac{\pi}{2},\frac{\pi}{2})$. Then for $z\in D$,

\begin{align*}
 \chi_F(z)=&\text{Re}\int_{\theta_0}^{\pi-\theta_0}\frac{\text{e}^{i\theta}+z}{\text{e}^{i\theta}-z}\cdot\frac{d\theta}{2\pi}\nonumber\\
 =&\frac{1}{\pi}\arg\frac{-\text{e}^{-i\theta_0}-z}{\text{e}^{i\theta_0}-z}+\frac{\theta_0}{\pi}-\frac{1}{2}.
\end{align*}
So $\chi_F (z)\geqslant\alpha$ if and only if $$\arg\frac{-\text{e}^{-i\theta_0}-z}{\text{e}^{i\theta_0}-z}\geqslant\pi \alpha+\frac{\pi}{2}-\theta_0.$$ This means that $z$ lies in the open disc intersect with $\partial D$ in $F$, and at an angle $\pi (1-\alpha)$

\end{proof}

\begin{remark}
Lemma \ref{geo} shows that for any fixed $\alpha \in(0,1)$, the sets $G_F(\alpha)$ are equivalent choices of the classical Carleson windows $W(\zeta,h)$ or $S(\zeta,h)$. In fact, there exist positive constants $C$ and $C'$, depending only on $\alpha$, such that for each arc $F$ with centre $\zeta$ and length $2h$ on $\partial D$ we have
$$\hat{W}(\zeta,Ch)\subset G_F(\alpha)\subset\hat{W}(\zeta,C'h).$$
  Note that sets $G_F(\alpha)$ are M\"obius invariant, i.e., for any automorphism $\phi$ of $D$, since $\hat{\chi}_{\phi(F)}=\hat{\chi}_F\comp\phi^{-1}$, therefore $$\phi(G_F(\alpha))=G_{\phi(F)}(\alpha).$$ The advantage of using $G_F(\alpha)$ instead of $W(\zeta,h)$ or $S(\zeta,h)$ can be shown in the proof of Theorem \ref{aver2}.
\end{remark}

\section{carleson measure on maximal ideal space of $H^{\infty}$}

Each analytic self-map $\varphi$ of $D$ has a unique continuous extension as a self-map of $\mathfrak{M}_{\infty}$, that is,
$$\varphi(m)(f)=m(f\comp\varphi)$$
for $m\in\mathfrak{M}_{\infty}$ and $f\in H^\infty$. In fact, we can prove that $\varphi$ is analytic on each Gleason part in the sense of the following Proposition \ref{ana}. Recall that the \textit{pseudo-hyperbolic distance on }$\mathfrak{M}_{\infty}$ is defined as $$\rho(m_1,m_2)=\{|\hat{f}(m_2)| : f\in H^\infty, \|f\|_\infty\leqslant1, \hat{f}(m_1)=0\}$$
for $m_1, m_2\in\mathfrak{M}_{\infty}$. The set
$$P(m)=\{m'\in\mathfrak{M}_{\infty} : \rho(m',m)<1\}$$
is called \textit{the Gleason part containing $m$}. For each $a\in D$, let $L_a(z)=\frac{a+z}{1+\overline{a}z}$.
It was proved by Hoffman in \cite{Hoff} that if $\{z_\alpha\}$ is a net in $D$ converging to a point $m\in\mathfrak{M}_{\infty}$, then $L_{z_\alpha}$ converges to a continuous map, denoted by $L_m$, in the product topology in $\left(\mathfrak{M}_\infty\right)^D$. It was also shown in \cite{Hoff} that $L_m$ maps the unit disk $D$ onto the Gleason part $P(m)$, and $L_m$ is either a constant or an injection. As a consequence, each Gleason part is either a single point or an analytic disk.
\begin{proposition}\label{ana}
Suppose $\varphi$ is an analytic self-map of $D$. Then for each $m\in\mathfrak{M}_{\infty}$, there exists an analytic self-map $\phi$ of $D$ with $\phi(0)=0$ such that
$$\varphi\comp L_m(z)=L_{\varphi(m)}\comp\phi(z)$$
for $z\in D$.
\end{proposition}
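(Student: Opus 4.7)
The plan is to realize $\phi$ as a normal-family limit of explicit analytic self-maps of $D$ built from $\varphi$ and M\"obius pull-backs along a net $\{z_\alpha\}\subset D$ converging to $m$ in $\mathfrak{M}_\infty$. Since $\varphi$ is continuous on $\mathfrak{M}_\infty$ we have $\varphi(z_\alpha)\to\varphi(m)$, and Hoffman's construction recalled in the excerpt gives $L_{z_\alpha}\to L_m$ and $L_{\varphi(z_\alpha)}\to L_{\varphi(m)}$ pointwise on $D$ in the product topology on $\mathfrak{M}_\infty^D$.

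For each $\alpha$, I would set
\[
\phi_\alpha(z) \;=\; L_{-\varphi(z_\alpha)}\!\bigl(\varphi(L_{z_\alpha}(z))\bigr), \qquad z\in D,
\]
which is an analytic self-map of $D$ with $\phi_\alpha(0)=0$, and which by construction satisfies the identity $\varphi\circ L_{z_\alpha}(z)=L_{\varphi(z_\alpha)}\circ\phi_\alpha(z)$ on $D$. By the Schwarz lemma the family $\{\phi_\alpha\}$ lies in the compact (in the compact-open topology) set of analytic self-maps of $D$ fixing the origin, so a subnet $\phi_\beta\to\phi$ locally uniformly on $D$ for some analytic self-map $\phi:D\to D$ with $\phi(0)=0$. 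It then remains to pass to the limit on both sides of this identity. The left-hand side is easy: continuity of $\varphi$ on $\mathfrak{M}_\infty$ together with $L_{z_\beta}(z)\to L_m(z)$ gives $\varphi(L_{z_\beta}(z))\to\varphi(L_m(z))$.

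The delicate step is the right-hand side, where both the outer map $L_{\varphi(z_\beta)}$ and the inner argument $\phi_\beta(z)$ vary with $\beta$. For each $f\in H^\infty$ the family $g_\beta:=f\circ L_{\varphi(z_\beta)}\in H^\infty$ is uniformly bounded by $\|f\|_\infty$, hence normal, and its pointwise limit on $D$ is $w\mapsto\hat{f}(L_{\varphi(m)}(w))$ by the product-topology convergence $L_{\varphi(z_\beta)}\to L_{\varphi(m)}$ together with continuity of $\hat{f}$ on $\mathfrak{M}_\infty$; Vitali's theorem upgrades this to locally uniform convergence on $D$. Because $\phi_\beta(z)\to\phi(z)$ stays in a fixed compact subset of $D$ eventually, the evaluation at the moving argument survives in the limit, yielding $g_\beta(\phi_\beta(z))\to\hat{f}(L_{\varphi(m)}(\phi(z)))$, i.e.\ $L_{\varphi(z_\beta)}(\phi_\beta(z))\to L_{\varphi(m)}(\phi(z))$ in $\mathfrak{M}_\infty$. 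Matching the two limits yields the desired identity. I expect the main obstacle to be precisely this joint-continuity issue: pointwise convergence in the weak-$*$ topology on $\mathfrak{M}_\infty$ is much weaker than uniform convergence, so one has to exploit the $H^\infty$ normal-family structure in order to evaluate at the moving point $\phi_\beta(z)$.
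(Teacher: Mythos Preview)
Your argument is correct and follows the same route as the paper: define $\phi_\alpha=L_{\varphi(z_\alpha)}^{-1}\circ\varphi\circ L_{z_\alpha}$, extract a locally uniformly convergent subnet via normality, and pass to the limit in the identity $\varphi\circ L_{z_\alpha}=L_{\varphi(z_\alpha)}\circ\phi_\alpha$. The only difference is that the paper dispatches the joint-limit step on the right-hand side by invoking Lemma~2.2 of Budde, whereas you supply the details yourself via the normal-family/Vitali argument for $f\circ L_{\varphi(z_\beta)}$ evaluated at the moving point $\phi_\beta(z)$.
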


\begin{proof}
Fix $m\in\mathfrak{M}_{\infty}$. Suppose $\{z_\alpha\}$ is a net in $D$ that converges to $m$. Let
$$\phi_\alpha=L_{\varphi(z_\alpha)}^{-1}\comp\varphi\comp L_{z_\alpha}.$$
Then $\phi_\alpha$ is analytic on $D$ and $\phi_\alpha(0)=0$. Since $\{\phi_\alpha\}\subset\overline{D}^D$ which is compact, by passing to a subnet we may assume that $\{\phi_\alpha\}$ converges to a $\phi\in\overline{D}^D$. By a normal family argument we can see this convergence is uniform on compact subsets in $D.$ Note $\phi(0)=0$, so $\phi$ is an analytic self-map of $D$. Since
$$\varphi\comp L_{z_\alpha}=L_{\varphi(z_\alpha)}\comp\phi_\alpha.$$
Thus the final result follows by lemma 2.2 in \cite{Budde},
\end{proof}

\begin{corollary}
Suppose $\varphi$ is an analytic self-map of $D$. Then
$$\rho(\varphi(m_1),\varphi(m_2))\leqslant\rho(m_1,m_2)$$
for any $m_1,m_2\in\mathfrak{M}_{\infty}$. Particularly, for any $m\in\mathfrak{M}_{\infty}$, the set $\varphi(P(m))$ is contained in $P(\varphi(m))$.
\end{corollary}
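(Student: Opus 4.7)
The plan is to reduce the statement to Proposition \ref{ana} together with the classical Schwarz lemma. If $m_1$ and $m_2$ belong to different Gleason parts then by definition $\rho(m_1,m_2)=1$ and the asserted inequality is vacuous, so I would assume $m_2\in P(m_1)$. By Hoffman's theorem recalled just before Proposition \ref{ana}, there then exists $w\in D$ with $m_2=L_{m_1}(w)$. The Schwarz--Pick identity $\rho(z_\alpha,L_{z_\alpha}(w))=|w|$ on $D$, combined with the convergence $L_{z_\alpha}\to L_{m_1}$ along any net $z_\alpha\to m_1$ (witnessed by the continuity of Gelfand transforms), extends to the maximal ideal space as the key identity
\[
\rho(m_1,L_{m_1}(w))=|w|.
\]

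Next I would apply Proposition \ref{ana} to produce an analytic self-map $\phi$ of $D$ with $\phi(0)=0$ such that $\varphi\circ L_{m_1}=L_{\varphi(m_1)}\circ\phi$. Evaluating at the point $w$ gives
\[
\varphi(m_2)=\varphi\bigl(L_{m_1}(w)\bigr)=L_{\varphi(m_1)}\bigl(\phi(w)\bigr),
\]
so in particular $\varphi(m_2)\in P(\varphi(m_1))$; this already establishes the ``particularly'' clause. Applying the same identity once more on the image side, and then invoking the Schwarz lemma for $\phi$ (which vanishes at $0$ and maps $D$ into itself), one obtains
\[
\rho(\varphi(m_1),\varphi(m_2))=|\phi(w)|\leqslant|w|=\rho(m_1,m_2),
\]
as required.

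I do not anticipate a serious obstacle. The analytic content is entirely packaged into Proposition \ref{ana} and into Hoffman's description of the Gleason parts via the maps $L_m$; the only point that requires a moment's care is the transfer of the identity $\rho(m,L_m(w))=|w|$ from $D$ to $\mathfrak{M}_\infty$, which however is standard in this circle of ideas and follows by testing against the extremal Schwarz--Pick function $\frac{z-z_\alpha}{1-\overline{z_\alpha}z}$ and passing to the limit along $z_\alpha\to m$.
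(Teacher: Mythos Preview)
Your proposal is correct and follows essentially the same route as the paper: dispose of the case $\rho(m_1,m_2)=1$, write $m_2=L_{m_1}(w)$, apply Proposition~\ref{ana} to obtain $\varphi(m_2)=L_{\varphi(m_1)}(\phi(w))$, and finish with the Schwarz lemma for $\phi$. The paper cites Hoffman's inequality (6.12) for the step $\rho\bigl(L_{\varphi(m_1)}(0),L_{\varphi(m_1)}(\phi(w))\bigr)\leqslant|\phi(w)|$ rather than asserting equality as you do; this is slightly safer, since $P(\varphi(m_1))$ may be a one-point part even when $P(m_1)$ is not, in which case your claimed equality $\rho(\varphi(m_1),\varphi(m_2))=|\phi(w)|$ fails --- but the inequality $\leqslant$ still holds, so your argument survives intact.
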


\begin{proof}
If $\rho(\varphi(m_1),\varphi(m_2))$ is $0$ or $1$, the result is obvious.

Now assume that $0<\rho(m_1,m_2)<1$, then $m_2$ and $m_1$ belong to a same nontrivial Gleason part. So there exists $w\in D$ such that $m_2=L_{m_1}(w)$. Thus by Proposition \ref{ana}, there exists an analytic self-map $\phi$ of $D$ such that $\phi(0)=0$ and
\begin{align*}
\varphi(m_2)=L_{\varphi(m_1)}\comp\phi(w).
\end{align*}
Also note that $\varphi(m_1)=L_{\varphi(m_1)}(0)$. So by (6.12) in \cite{Hoff},
\begin{align*}
\rho(\varphi(m_1),\varphi(m_2))\leqslant\rho(0,\phi(w))\leqslant&\rho(0,w)
=\rho(m_1,m_2).
\end{align*}
\end{proof}

A result of M. Behrens states that $\varphi$ is an inner function if and only if $\varphi$ maps $\mathfrak{M}_{L^\infty}$ into itself. We now prove a stronger result here.

\begin{theorem}\label{Bud}
Suppose $\varphi$ is an analytic self-map of $D$. Let $F=\{\zeta\in\partial D : |\varphi^*(\zeta)|=1\}$, where $\varphi^*(\zeta)$ represents the radial limit of $\varphi$ at $\zeta$. Then $\varphi$ maps $\hat{F}$ into the Shilov boundary $\mathfrak{M}_{L^\infty}$.
\end{theorem}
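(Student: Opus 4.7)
The strategy is to show that the functional $\varphi(m)$ on $H^\infty$ extends to a multiplicative linear functional on $L^\infty(\partial D)$; via the identification $\mathfrak{M}_{L^\infty}\subset\mathfrak{M}_\infty$ this places $\varphi(m)$ in the Shilov boundary.

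Fix $m\in\hat F$. Since $\hat F\subset\mathfrak{M}_{L^\infty}$, $m$ is already defined multiplicatively on all of $L^\infty(\partial D)$, with $m(\chi_F)=\hat\chi_F(m)=1$ and hence $m(\chi_{\partial D\setminus F})=0$. For each $g\in L^\infty(\partial D)$ set
\[
\tilde g(\zeta):=\begin{cases} g(\varphi^*(\zeta)),& \zeta\in F,\\ 0,& \zeta\in\partial D\setminus F,\end{cases}
\]
where $g(\varphi^*(\zeta))$ is read off from a Borel representative of $g$ at the boundary point $\varphi^*(\zeta)$. Granting that $\tilde g\in L^\infty(\partial D)$, I define $M(g):=m(\tilde g)$ and plan to verify that $M$ is a multiplicative linear functional on $L^\infty(\partial D)$ whose restriction to $H^\infty$ coincides with $\varphi(m)$; this identifies $\varphi(m)$ as an element of $\mathfrak{M}_{L^\infty}$.

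Multiplicativity follows from the pointwise identity $\widetilde{g_1g_2}=\tilde g_1\tilde g_2$: both sides vanish on $\partial D\setminus F$, and on $F$ the equality $(g_1g_2)(\varphi^*(\zeta))=g_1(\varphi^*(\zeta))g_2(\varphi^*(\zeta))$ is immediate. Together with $M(1)=m(\chi_F)=1$ this gives a unital multiplicative linear functional. The substantive step is the compatibility $M(f)=\varphi(m)(f)=m(f\comp\varphi)$ for every $f\in H^\infty$, i.e.\ $m(f\comp\varphi-\tilde f)=0$. Invoking the classical boundary composition formula
\[
(f\comp\varphi)^*(\zeta)=f^*(\varphi^*(\zeta))\qquad\text{for a.e. }\zeta\in F,
\]
together with $\tilde f=0$ on $\partial D\setminus F$, one sees that $f\comp\varphi-\tilde f$ vanishes on $F$ as an element of $L^\infty$. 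Hence $f\comp\varphi-\tilde f=(f\comp\varphi-\tilde f)\chi_{\partial D\setminus F}$, and multiplicativity of $m$ on $L^\infty$ yields
\[
m(f\comp\varphi-\tilde f)=m(f\comp\varphi-\tilde f)\cdot m(\chi_{\partial D\setminus F})=0.
\]

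The principal obstacle is making sense of $\tilde g$ as an element of $L^\infty(\partial D)$ and justifying the boundary composition formula on $F$. Both reduce to the classical absolute continuity statement that the push-forward of $d\theta/2\pi|_F$ by $\varphi^*$ is absolutely continuous with respect to $d\theta/2\pi$ on $\partial D$, so that Lebesgue-null sets pull back to Lebesgue-null sets in $F$. Given this, $f^*\comp\varphi^*$ is well defined a.e.\ on $F$, and the composition identity follows from Lindel\"of's theorem applied at the nontangential Fatou points of $\varphi$ lying in $F$. I would cite these boundary behavior facts rather than reprove them here.
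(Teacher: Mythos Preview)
Your argument is correct but follows a different route from the paper. The paper's proof is considerably shorter: rather than constructing an explicit extension of $\varphi(m)$ to $L^\infty(\partial D)$, it invokes Newman's characterization of the Shilov boundary (a point $m'\in\mathfrak{M}_\infty$ lies in $\mathfrak{M}_{L^\infty}$ if and only if $|\hat u(m')|=1$ for every inner function $u$). For a fixed inner $u$, the same boundary composition formula you cite gives $|(u\comp\varphi)^*|=1$ a.e.\ on $F$, hence $|u\comp\varphi|\cdot\chi_F=\chi_F$ in $L^\infty(\partial D)$; applying $m\in\hat F$ to the Gelfand transforms yields $|\hat u(\varphi(m))|=1$, and Newman's theorem finishes.

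Your approach has the merit of being self-contained in the sense that it produces the extension explicitly, and it makes transparent \emph{which} element of $\mathfrak{M}_{L^\infty}$ the point $\varphi(m)$ is: namely the pull-back functional $g\mapsto m(\tilde g)$. The price is that you must verify well-definedness of $\tilde g$ for arbitrary $g\in L^\infty(\partial D)$, which forces you to invoke the absolute-continuity statement for $(\varphi^*)_*\bigl(\tfrac{d\theta}{2\pi}|_F\bigr)$. The paper sidesteps this by only ever composing with inner functions (whose boundary values have modulus $1$ a.e., so no null-set ambiguity arises in the relevant identity $|u\comp\varphi|\chi_F=\chi_F$), and then letting Newman's theorem carry the load. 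Both proofs ultimately rest on the same boundary composition fact $(f\comp\varphi)^*=f^*\comp\varphi^*$ a.e.\ on $F$, but the paper's use of it is lighter.
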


\begin{proof}
Fix an arbitrary inner function $u$. Then by Proposition 2.25 in \cite{Cowen}, $(u\comp\varphi)^*=u^*\comp\varphi^*$ almost everywhere on $\partial D$, so $|(u\comp\varphi)^*(\zeta)|=1$ for almost every $\zeta\in F$. Therefore we have
$|u\comp\varphi|\cdot\chi_F$ equals to $\chi_F$ as functions in $L^\infty(\partial D)$, hence the Gelfand transforms of $\chi_F\cdot |u\comp\varphi|$ and $\chi_F$ are the same.  So for $m\in\hat{F}$, one has
$$\hat{\chi}_F(m)\cdot |\hat{u}\comp\varphi(m)|=\hat{\chi}_F(m)=1,$$
i.e., $|\hat{u}(\varphi(m))|=1$. By a theorem of Newman, see Theorem 2.2 in Chapter \uppercase\expandafter{\romannumeral5} of \cite{BAF}, we must have $\varphi(m)$ belongs to $\mathfrak{M}_{L^\infty}$.
\end{proof}


The \textit{pull-back measure} $\mu_\varphi$ of an analytic self-map $\varphi$ is defined as
$$\mu_\varphi(E)=\sigma\left(\varphi^{-1}(E)\cap\mathfrak{M}_{L^\infty}\right)$$
for any Borel subset $E$ of $\mathfrak{M}_{\infty}$. It is easy to check that $\mu_\varphi$ is always a Carleson measure on $\mathfrak{M}_{\infty}$. In fact, according to the proof of the boundness of $C_\varphi$ on $H^2$, see the proof of Theorem 5.1.5 in \cite{237} for example, we can know that for any Borel set $F\subset \partial D,$
$$\int_{\mathfrak{M}_{\infty}}\hat{\chi}_Fd\mu_\varphi\leqslant C\int_{\mathfrak{M}_{L^{\infty}}}\hat{\chi}_Fd\sigma=C\sigma(\hat{F}),$$
where $C=\frac{1+|\varphi(0)|}{1-|\varphi(0)|}$. So
\begin{align*}
  \mu_\varphi\left(G_F(\alpha)\right)\leqslant\frac{C}{\alpha}\int_{G_F(\alpha)}\hat{\chi}_Fd\mu_\varphi\leqslant
  \frac{C}{\alpha}\cdot\sigma(\hat{F}).
\end{align*}

\begin{proposition}\label{pms}
Suppose $\varphi$ is an analytic self-map of $D$. Then
$\mu_\varphi(E)=0$
whenever $E$ is contained in $\mathfrak{M}_{\infty}\backslash(D\cup\mathfrak{M}_{L^\infty})$.
\end{proposition}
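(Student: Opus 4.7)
The plan is to show that for $\sigma$-almost every $m \in \mathfrak{M}_{L^\infty}$, the image $\varphi(m)$ actually lands inside $D \cup \mathfrak{M}_{L^\infty}$; since $\mu_\varphi(E) = \sigma(\varphi^{-1}(E) \cap \mathfrak{M}_{L^\infty})$ by definition, this gives $\mu_\varphi(E) = 0$ for every $E$ disjoint from $D \cup \mathfrak{M}_{L^\infty}$. The decomposition of $\mathfrak{M}_{L^\infty}$ I would use is the natural one supplied by the set $F = \{\zeta \in \partial D : |\varphi^*(\zeta)| = 1\}$ that already appeared in Theorem \ref{Bud}.

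Theorem \ref{Bud} immediately handles the piece $\hat{F}$: one has $\varphi(\hat{F}) \subset \mathfrak{M}_{L^\infty}$, so these points cause no trouble. For the complement, set $E_n = \{\zeta \in \partial D : |\varphi^*(\zeta)| \leq 1 - 1/n\}$. The $E_n$ are increasing and their union equals $\partial D \setminus F$ up to a null set, so from $\sigma(\hat{A}) = \frac{d\theta}{2\pi}(A)$ and countable additivity one obtains $\sigma\bigl(\bigcup_n \hat{E}_n\bigr) = \sigma(\mathfrak{M}_{L^\infty} \setminus \hat{F})$. It therefore suffices to prove $\varphi(m) \in D$ for every $m \in \bigcup_n \hat{E}_n$.

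Fix such an $m$, say $m \in \hat{E}_n$. The element $\varphi^* \chi_{E_n} \in L^\infty(\partial D)$ has sup-norm at most $1 - 1/n$, while multiplicativity of $m$ together with $\hat{\chi}_{E_n}(m) = 1$ gives $m(\varphi^* \chi_{E_n}) = m(\varphi^*) = \hat{\varphi}(m)$, so $|\hat{\varphi}(m)| \leq 1 - 1/n < 1$. To upgrade this scalar bound to the genuine statement $\varphi(m) \in D$ (i.e., that $\varphi(m)$ is an evaluation functional at a point of $D$), set $w = \hat{\varphi}(m)$ and consider the Blaschke factor $B_w(z) = (z - w)/(1 - \bar{w} z)$. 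Since $f - f(w)$ vanishes at $w$ and $B_w$ is inner, the quotient $k := (f - f(w))/B_w$ lies in $H^\infty$ for every $f \in H^\infty$; composing with $\varphi$ and applying $m$ then yields
$$m(f \circ \varphi) - f(w) = m(B_w \circ \varphi)\, m(k \circ \varphi) = B_w(\hat{\varphi}(m))\, m(k \circ \varphi) = 0,$$
so $\varphi(m)(f) = f(w)$ for every $f \in H^\infty$, meaning $\varphi(m)$ is evaluation at $w \in D$.

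The main obstacle is this last identification: the scalar inequality $|\hat{\varphi}(m)| < 1$ does not by itself force $\varphi(m)$ to be an evaluation functional on $H^\infty$, and the Blaschke-factor trick is what bridges the two. It is also this step that demands \emph{strict} inequality $|\hat{\varphi}(m)| < 1$, which is precisely why the decomposition into the sets $E_n$ is required rather than a single pointwise estimate directly on $\mathfrak{M}_{L^\infty} \setminus \hat{F}$.
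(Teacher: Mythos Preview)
Your proof is correct and follows essentially the same route as the paper: decompose $\partial D$ into $F=\{|\varphi^*|=1\}$ and the sets $E_n=\{|\varphi^*|\leqslant 1-1/n\}$, invoke Theorem~\ref{Bud} on $\hat F$, and use the norm bound on $\varphi^*\chi_{E_n}$ for the rest. The only difference is that you spell out the Blaschke-factor argument for why $|\hat\varphi(m)|<1$ forces $\varphi(m)\in D$, whereas the paper simply uses this standard identification of $D$ inside $\mathfrak{M}_\infty$ without comment.
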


\begin{proof}
Let $F_n=\{\zeta\in\partial D : |\varphi^*(\zeta)|<1-1/n\}$, and let $g_n=\varphi^*\cdot\hat{\chi}_{F_n}\in L^\infty(\partial D)$. Then $\|g_n\|_\infty<1$. So for any $m\in\hat{F}_n$, we have $|\hat{\varphi}(m)|=|\hat{g}_n(m)|<1$. This means that $\varphi(m)\in D$. Therefore we have $\varphi(\hat{F}_n)$ is contained in $D$.

Now let $F_0=\{\zeta\in\partial D : |\varphi^*(\zeta)|=1\}$. By Theorem \ref{Bud}, $\varphi(\hat{F}_0)\subset\mathfrak{M}_{L^\infty}$. Note that $\partial D\backslash \bigcup_{n=0}^\infty F_n$ is of measure zero, so
\begin{align*}
\sigma(\bigcup_{n=0}^\infty\hat{F}_n)=&\sigma(\bigcup_{n=1}^\infty\hat{F}_n)+\sigma(\hat{F}_0)=1.
\end{align*}
Therefore we have
\begin{align*}
\mu_\varphi\left(\mathfrak{M}_{\infty}\backslash(D\cup\mathfrak{M}_{L^\infty})\right)\leqslant\sigma(\mathfrak{M}_{L^\infty}\backslash\bigcup_{n=0}^\infty\hat{F}_n)=0.
\end{align*}
\end{proof}

\section{nevanlinna counting functions near the boundary}






The aim of this section is to give a precise relation between the Nevanlinna counting function and pull-back measure on Carleson windows. The main result in this section is Theorem \ref{coro1}.

Our main tool is the following Stanton's formula (see \cite{Stan}, Theorem 2): Suppose $\varphi$ is an analytic self-map of $D$ and $G$ is a subharmonic function from $D$ to $\mathbb{R}$. Then
\begin{align}\label{Stan}
\lim_{r\to 1}\int_{\partial D}G(\varphi(r\zeta))\frac{d\theta}{2\pi}(\zeta)=G(\varphi(0))+\frac{1}{2}\int_D\Delta G(z)N_\varphi(z)dv(z).
\end{align}
The following formula, which is Theorem 8.16 in \cite{Rudin}, will also be used repeatedly in this paper: Suppose $f$ : $X\to[0,+\infty)$ is Borel measurable on $(X,d\mu)$. Function $H$ is monotonic on $[0,+\infty)$, absolutely continuous on $[0,T]$ for all $T>0$, and $H(0)=0$. Then
\begin{align}\label{Rudin}
\int_XH\comp f d\mu=\int_0^\infty\mu\{f>t\}H'(t)dt.
\end{align}
The derivatives in (\ref{Stan}) and (\ref{Rudin}) are taken as distributions.

It is well known that the function $N_\varphi$ satisfies the sub-mean value property, see Lemma 3.18 in \cite{Cowen}. Futhermore, it is shown in \cite{complete} that $N_\varphi$ equals to the function
\begin{align*}
 \overline{N}_{\varphi} (a)=-\log
  |a - \varphi (0) | + \int_{\partial D} \log |a - \varphi^{\ast} (\zeta) | d
  \sigma (\zeta)
\end{align*}
outside a set of logarithmic capacity zero in $D$. Note that $\int_{\partial D} \log |a -\varphi^{\ast} (\zeta) | d\sigma (\zeta)$ is the potential of $\mu_\varphi$, so by Theorem 3.1.2 in \cite{pot}, $\overline{N}_{\varphi}$ is subharmonic on $D\backslash \varphi(0)$. (Also see Theorem 2.3 in \cite{complete}.)

First we begin with the following proposition. In what follows, we use $ds$ to denote the Lebesgue line element on the plane.
\begin{proposition}\label{aver1}
Suppose $\varphi$ is an analytic self-map of $D$. For a given bounded linear fractional map $f(z)=\frac{az+b}{cz+d}$ on $D$ with $ad-bc=1$, let $G_f(t) = \{ m \in \mathfrak{M}_{\infty} : | \hat{f} (m) | \geqslant t \}$ and $L_f(t)=\{z\in D : |f(z)|=t\}$. Define
\begin{align*}
J_t=\frac{1}{2\pi t}\int_{L_f(t)}\frac{N_\varphi(z)}{|cz+d|^2}ds(z).
\end{align*}

It follows that

(\romannumeral1) if $\varphi(0)\in G_f(t)$, then
\begin{align*}
J_{t}=&-\log\frac{|f\comp\varphi(0)|}{t}+\int_{t}^\infty\frac{\mu_\varphi(G_f(u))}{u}du,\\
\end{align*}

(\romannumeral2) if $\varphi(0)\notin G_f(t)$, then
\begin{align*}
J_{t}=&\int_{t}^\infty\frac{\mu_\varphi(G_f(u))}{u}du.\\
\end{align*}

\end{proposition}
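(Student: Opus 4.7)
The plan is to apply Stanton's formula (\ref{Stan}) to the bounded subharmonic function $G(z)=\log^+|f(z)/t|$ on $D$, and then rewrite the boundary-side of that formula in terms of $\mu_\varphi$ using formula (\ref{Rudin}).

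First I compute the distributional Laplacian $\Delta G$. The function $\log|f/t|$ is harmonic off the zero of $f$ and is positive precisely on $\{|f|>t\}$, so $G=\max(\log|f/t|,0)$. By the standard jump formula for the positive part of a harmonic function, $\Delta G$ is a singular measure supported on the level curve $L_f(t)$, equal there to $|\nabla\log|f/t||\,ds$. Since $f'(z)=1/(cz+d)^2$ and $|\nabla\log|f||=|f'|/|f|$, this works out to
$$\Delta G=\frac{1}{t|cz+d|^2}\,ds\big|_{L_f(t)}.$$
Using the paper's convention that $v$ is normalized area ($v=dA/\pi$), an approximation of $G$ by smooth subharmonic functions together with the upper semicontinuity of $N_\varphi$ shows that the right-hand side of (\ref{Stan}) equals
$$\frac{1}{2}\int_D\Delta G\cdot N_\varphi\,dv=\frac{1}{2\pi t}\int_{L_f(t)}\frac{N_\varphi(z)}{|cz+d|^2}\,ds(z)=J_t.$$

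Next I rewrite the boundary side. Because $f$ is a bounded M\"obius map it extends continuously to $\overline{D}$, so $f\comp\varphi\in H^\infty$ has boundary values $(f\comp\varphi)^*=f\comp\varphi^*$ almost everywhere; combined with bounded convergence (the integrand is bounded by $\log^+(\|f\|_\infty/t)$), this lets me replace the limit in (\ref{Stan}) by $\int_{\partial D}\log^+|f(\varphi^*(\zeta))/t|\,d\theta/(2\pi)$. The definition of $\mu_\varphi$, together with the identity $\widehat{f\comp\varphi}=\hat{f}\comp\varphi$ on $\mathfrak{M}_\infty$ and the identification $\sigma\leftrightarrow d\theta/(2\pi)$ on the Shilov boundary, then turns this into $\int_{\mathfrak{M}_\infty}\log^+|\hat{f}(m)/t|\,d\mu_\varphi(m)$. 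Finally, I apply (\ref{Rudin}) on $(\mathfrak{M}_\infty,\mu_\varphi)$ to the function $m\mapsto|\hat{f}(m)|$ with $H(s)=\log^+(s/t)$ (so $H'(s)=s^{-1}\mathbf{1}_{s>t}$), which converts the previous integral into $\int_t^\infty\mu_\varphi(G_f(u))\,du/u$ (strict versus non-strict inequality is irrelevant since level sets $\{|\hat f|=u\}$ have $\mu_\varphi$-measure zero for a.e.\ $u$).

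Combining these steps gives
$$J_t=\int_t^\infty\frac{\mu_\varphi(G_f(u))}{u}\,du-\log^+\frac{|f(\varphi(0))|}{t},$$
and the two stated cases follow immediately: when $\varphi(0)\in G_f(t)$ one has $\log^+(|f(\varphi(0))|/t)=\log(|f(\varphi(0))|/t)$, yielding (i); when $\varphi(0)\notin G_f(t)$ this term is $0$, yielding (ii). The main obstacle is the first step: validating the extended version of Stanton's formula when $\Delta G$ is a singular measure concentrated on a curve, and checking that the right-hand side comes out with exactly the normalization that produces $J_t$ as defined.
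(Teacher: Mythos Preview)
Your approach is essentially the paper's: apply Stanton's formula to a truncation of $\log|f|$ and convert the boundary term with the layer-cake identity (\ref{Rudin}). The difference is only in how the singular Laplacian is handled. You take the one-sided truncation $G=\log^+|f/t|$ and compute $\Delta G$ directly as the jump measure $\tfrac{1}{t|cz+d|^2}\,ds$ on $L_f(t)$, which yields $J_t$ in one stroke but, as you note, requires justifying the pairing of this line measure with $N_\varphi$. The paper instead takes a two-sided truncation $H$ (constant below $t_1$ and above $t_2$), changes variables via $\tau=1/f$ to polar coordinates so that the area integral becomes $\int_0^\infty (uH'(u))'\,J_u\,du$, and only then evaluates the distributional derivative $(uH'(u))'=\delta_{t_1}-\delta_{t_2}$ against the \emph{continuous} function $u\mapsto J_u$ (continuity coming from the sub-mean value property of $N_\varphi$); letting $t_2\to\infty$ finishes. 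In effect the paper's two-level truncation plus limit is precisely the approximation argument you allude to for legitimizing your singular step, and the continuity of $J_u$ is the substitute for the ``upper semicontinuity of $N_\varphi$'' you invoke (which is not quite the right property of $N_\varphi$ itself). Both routes give the same identity; yours is more direct once the pairing is granted, the paper's makes that pairing explicit.
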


\begin{proof}
Take $\Phi(t)=\log t$. Fix $t_2 > t_1 > 0$ and define
$$H (t) = \left\{ \begin{array}{ll}
     \Phi (t_1), & 0 \leqslant t < t_1,\\
     \Phi (t), & t_1 \leqslant t \leqslant t_2,\\
     \Phi (t_2), & t > t_2.
   \end{array} \right.$$
By (\ref{Stan}) we have
\begin{align}\label{zuoyou1}
  \int_{\mathfrak{M}_{L^{\infty}}} H (|\hat{f} \comp \varphi |) d\sigma = H (|f \comp \varphi (0) |) + \frac{1}{2}  \int_D \Delta (H \comp|f|) \cdot
  N_{\varphi} dv.
\end{align}

The left side of (\ref{zuoyou1}) equals to
\begin{align}
  \int_{\mathfrak{M}_{\infty}} H (|\hat{f}|) d \mu_{\varphi} \nonumber= &\Phi (t_1)+ \int_0^\infty H'(u) \cdot\mu_{\varphi}(G_f(u))du\nonumber \\
  = & \Phi (t_1)+ \int_{t_1}^{t_2} \frac{\mu_{\varphi}(G_f(u))}{u}du,
\end{align}
where the first equality follows from (\ref{Rudin}).

Now let's deal with the right hand side of (\ref{zuoyou1}). First note that
\begin{align}
H (|f \comp \varphi (0) |) = \left\{ \begin{array}{ll}
     \Phi (t_1), & \varphi(0)\notin G_f(t_1),\\
     \Phi (|f \comp \varphi (0) |), & \varphi(0)\in G_f(t_1)\backslash G_f(t_2),\\
     \Phi (t_2), & \varphi(0)\in G_f(t_2).
   \end{array} \right.
\end{align}
Let $\tau=1/f$, then $\tau$ maps $G_f(t)\cap D$ onto $\overline{D(0,1/t)}\cap \tau(D)$. Since
\begin{align*}
\Delta(H\comp|f|)=&\left(H''(|f|)+\frac{H'(|f|)}{|f|}\right)\cdot|f'|^2\\
=&\Big(H''(|f|)\cdot|f|^4+H'(|f|)\cdot|f|^3\Big)\cdot|\tau'|^2,
\end{align*}
we have
\begin{align}
 &\frac{1}{2}  \int_D \Delta( H \comp|f|) \cdot N_{\varphi} dv\nonumber\\
 =&\frac{1}{2}  \int_D \Big(H''(|f|)\cdot|f|^4+H'(|f|)\cdot|f|^3\Big) N_{\varphi}\cdot|\tau'|^2  dv\nonumber\\
 =&\frac{1}{2\pi}\int_0^\infty dr\int_{\tau(L_{\frac{1}{r}})} \left( H''(\frac{1}{r})\cdot\frac{1}{r^4} +H' (\frac{1}{r})\cdot\frac{1}{r^3} \right)N_\varphi\comp\tau^{-1}(w)ds(w)\nonumber\\
 =&\int_0^\infty\left(u\cdot H'(u)\right)'\cdot J_udu,\nonumber
\end{align}
where the last equality follows by a change of variable $u=1/r.$ The sub-mean value property of $N_\varphi$ guarantees that $J_u$ is continuous with respect to $u$. And $\left(u\cdot H'(u)\right)'=\delta_{t_1}-\delta_{t_2},$
where $\delta_{t_j}$ represents the Dirca measure on $\mathbb{R}$ at $t_j$, which is a distribution of order zero. So
\begin{align}\label{dd}
 \frac{1}{2}  \int_D \Delta( H \comp|f|) \cdot N_{\varphi} dv=J_{t_1}-J_{t_2}.
\end{align}

Now combining (\ref{zuoyou1}) - (\ref{dd}), if $\varphi(0)\notin G_f(t_1)$ then
$$J_{t_1}-J_{t_2}=\int_{t_1}^{t_2} \frac{\mu_{\varphi}(G_f(u))}{u}du.$$
Note that $\lim_{t_2\to\infty}J_{t_2}=0$. Therefore we have
\begin{align*}
J_{t_1}=\int_{t_1}^\infty\frac{\mu_\varphi(G_f(u))}{u}du.
\end{align*}

If $\varphi(0)\in G_f(t_1)\backslash G_f(t_2)$, then
$$J_{t_1}-J_{t_2}=\Phi(t_1)-\Phi (|f \comp \varphi (0) |)+\int_{t_1}^{t_2} \frac{\mu_{\varphi}(G_f(u))}{u}du.$$
So we have
\begin{align*}
J_{t_1}=-\log\frac{|f\comp\varphi(0)|}{t_1}+\int_{t_1}^\infty\frac{\mu_\varphi(G_f(u))}{u}du.
\end{align*}
\end{proof}

As a simple application, we can calculate the integration of $N_\varphi(z)$ on any circle or disk contained in $D$ in terms of the pull-back measure, which is the main result in \cite{Kim}. Note that the proof in \cite{Kim} is quite different form ours here.

\begin{corollary}
[Theorem 3.1 in \cite{Kim}] Suppose $\varphi$ is an analytic self-map of $D$. Then

(i)$$\int_0^{2\pi}N_\varphi(r\text{e}^{i\theta})\frac{d\theta}{2\pi}=-\log^+\frac{|\varphi(0)|}{r}+
\int_r^1\frac{\mu_\varphi(\mathfrak{M}_{\infty}\backslash D(0,u))}{u}du,$$
where $\log^+x=\max\{0, \log x\}$,

(ii)\begin{align*}
\int_{D(0,r)}N_\varphi(z)dv(z)=r^2\overline{N}_\varphi(0)+r^2\int_{D(0,r)}\left(\frac{|z|^2}{2r^2}-\log\frac{|z|}{r}-\frac{1}{2}\right)d\mu_\varphi(z)
\end{align*}
for $0<r<|\varphi(0)|.$

\end{corollary}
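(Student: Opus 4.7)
The plan is to deduce both parts of the corollary from Proposition~\ref{aver1} applied to the identity map $f(z)=z$. With $a=d=1,\ b=c=0$, we have $ad-bc=1$ and $|cz+d|^{2}\equiv 1$; the level set $L_{f}(t)$ is the circle $|z|=t$ with $ds=t\,d\theta$, so $J_{t}=\int_{0}^{2\pi}N_{\varphi}(te^{i\theta})\,\frac{d\theta}{2\pi}$. Since $|\hat{z}|=|z|$ on $D$ and $|\hat{z}|\equiv 1$ on $\mathfrak{M}_{L^{\infty}}$ (because $|z|=1$ a.e.\ on $\partial D$), while Proposition~\ref{pms} discards the remainder of $\mathfrak{M}_{\infty}$, one has $\mu_{\varphi}(G_{f}(u))=\mu_{\varphi}(\mathfrak{M}_{\infty}\setminus D(0,u))$ for $u\in(0,1)$, and $G_{f}(u)=\emptyset$ for $u>1$ by the maximum modulus principle on $H^{\infty}$. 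Noting that $\varphi(0)\in G_{f}(t)$ iff $|\varphi(0)|\geqslant t$, the two cases of Proposition~\ref{aver1} combine via $\log^{+}$ to give precisely (i).

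For (ii), since $0<\rho<r<|\varphi(0)|$ throughout, only the first case of (i) applies, so integrating against $2\rho\,d\rho$ and writing $M(u):=\mu_{\varphi}(\mathfrak{M}_{\infty}\setminus D(0,u))$,
$$\int_{D(0,r)}N_{\varphi}\,dv=-\int_{0}^{r}2\rho\log\tfrac{|\varphi(0)|}{\rho}\,d\rho+\int_{0}^{r}2\rho\int_{\rho}^{1}\frac{M(u)}{u}\,du\,d\rho.$$
A one-line integration by parts yields $-r^{2}\log\frac{|\varphi(0)|}{r}-\frac{r^{2}}{2}$ for the first summand, and Fubini reduces the second to $\int_{0}^{r}uM(u)\,du+r^{2}\int_{r}^{1}\frac{M(u)}{u}\,du$. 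To recognise the answer as the right-hand side of (ii), I rewrite the ingredients in the same language. The definition of $\overline{N}_{\varphi}$ and the push-forward formula give $\overline{N}_{\varphi}(0)=-\log|\varphi(0)|+\int_{\mathfrak{M}_{\infty}}\log|\hat{z}|\,d\mu_{\varphi}$; since $\log|\hat{z}|\equiv 0$ on $\mathfrak{M}_{L^{\infty}}$, a layer-cake identity yields $\int\log|\hat{z}|\,d\mu_{\varphi}=-\int_{0}^{1}\frac{1-M(s)}{s}\,ds$. The same technique gives $\int_{D(0,r)}|z|^{2}d\mu_{\varphi}=\int_{0}^{r}2uM(u)du-r^{2}M(r)$, $\int_{D(0,r)}\log\frac{r}{|z|}\,d\mu_{\varphi}=\int_{0}^{r}\frac{1-M(s)}{s}\,ds$, and $\mu_{\varphi}(D(0,r))=1-M(r)$. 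Substituting and collecting, the desired equality collapses to the elementary identity $\int_{r}^{1}\frac{du}{u}=-\log r$.

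I expect the main obstacle to be the bookkeeping in the last step: one has to juggle the several $\log r$ and $r^{2}/2$ constants and correctly split $\int_{D}\log|z|\,d\mu_{\varphi}$ across the regions $D(0,r)$ and its complement (with a change of order between $\int_{0}^{r}$ and $\int_{r}^{1}$). Convergence of $\int_{0}^{1}\frac{1-M(s)}{s}ds$ and the finiteness of $\overline{N}_{\varphi}(0)$ are implicit; both hold under the standing hypothesis $|\varphi(0)|>r>0$, which forces $\mu_{\varphi}(\{0\})=0$ and puts $\varphi$ in the Nevanlinna class.
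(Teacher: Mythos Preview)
Your argument is correct. Part~(i) is handled exactly as in the paper: take $f(z)=z$ in Proposition~\ref{aver1}, identify $G_f(t)=\mathfrak{M}_\infty\setminus D(0,t)$ and $J_t=\int_0^{2\pi}N_\varphi(te^{i\theta})\,\tfrac{d\theta}{2\pi}$, and merge the two cases via $\log^{+}$.

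For part~(ii) your route diverges from the paper's in a genuine (if minor) way. The paper does \emph{not} integrate formula~(i) in $\rho$. Instead it subtracts the two instances of case~(i) in Proposition~\ref{aver1} at radii $r$ and $r'<r$ to obtain
\[
J_r-J_{r'}=\log\tfrac{r}{r'}-\int_{r'}^{r}\frac{\mu_\varphi(G_f(u))}{u}\,du=\int_{r'}^{r}\frac{\mu_\varphi(D(0,u))}{u}\,du,
\]
then observes that $\lim_{r'\to 0}J_{r'}=\overline{N}_\varphi(0)$ (circular means of $N_\varphi$ converge to the subharmonic regularisation at the centre). This yields the clean identity $J_r=\overline{N}_\varphi(0)+\int_0^r \mu_\varphi(D(0,u))\,\tfrac{du}{u}$, after which integrating $2tJ_t\,dt$ and one application of the layer-cake formula~(\ref{Rudin}) finish the job. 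Your approach instead carries the complement $M(u)=\mu_\varphi(\mathfrak{M}_\infty\setminus D(0,u))$ throughout, expresses $\overline{N}_\varphi(0)$ as $-\log|\varphi(0)|-\int_0^1\tfrac{1-M(s)}{s}\,ds$ via a separate layer-cake computation, and then matches terms. The paper's route is shorter and avoids ever unpacking $\overline{N}_\varphi(0)$; yours requires the extra bookkeeping you flagged but has the virtue of being a direct consequence of the already-proved~(i). Both are valid, and your convergence remark (finiteness of $\int_0^1\tfrac{1-M(s)}{s}\,ds$ because $\varphi\in H^\infty\subset N$ and $\varphi(0)\neq 0$) correctly addresses the only delicate point.
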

\begin{proof}

Take $f(z)=z$ in Proposition \ref{aver1}, then $G_f(t)=\mathfrak{M}_{\infty}\backslash D(0,t)$ and
\begin{align*}
J_t=\frac{1}{2\pi t}\int_{|z|=t}N_\varphi(z)ds(z).
\end{align*}
Then (\textit{\romannumeral1}) follows directly from Proposition \ref{aver1}.

For (\textit{\romannumeral2}), since $\varphi(0)\in G_f(r)$, so for any $r'\in(0,r)$ we have $\varphi(0)\in G_f(r')$, whence by Proposition \ref{aver1} we have
\begin{align*}
J_r-J_{r'}=&\log\frac{r}{r'}-\int_{r'}^r\frac{\mu_\varphi(G_f(u))}{u}du\\
=&\int_{r'}^r\frac{D(0,u)}{u}du.
\end{align*}
Note that $\lim_{r'\to0}J_{r'}=\overline{N}_\varphi(0)$, so
\begin{align*}
J_r=\overline{N}_\varphi(0)+\int_{0}^r\frac{D(0,u)}{u}du.
\end{align*}
Therefore,
\begin{align*}
\int_{D(a,r)}N_\varphi(z)dv(z)=&2\int_0^rtJ_tdt\\
=&2\int_0^r\left[t\overline{N}_\varphi(0)+t\int_0^t\frac{\mu_\varphi(D(0,u))}{u}du\right]dt\\
=&r^2\overline{N}_\varphi(0)+\int_0^r\frac{r^2-u^2}{u}\mu_\varphi(D(0,u)))du\\
=&r^2\overline{N}_\varphi(0)+r^2\int_{D(0,r)}\left(\frac{|z|^2}{2r^2}-\log\frac{|z|}{r}-\frac{1}{2}\right)d\mu_\varphi,
\end{align*}
where the last equality follows from (\ref{Rudin}).

\end{proof}

We are more interested in the integrations of $N_\varphi$ on Carleson windows, which can also be calculated with the help of Proposition \ref{aver1}. Recall that $S(\zeta,h)=\{z\in D : |z-\zeta|<h\}$. Now we can give our main theorem in this section.

\begin{theorem}\label{coro1}
Suppose $\varphi$ is an analytic self-map of $D$. If $\varphi(0)\notin S(\zeta,h)$, then
\begin{align*}
\int_{S(\zeta,h)}N_\varphi(z)dv(z)=\int_0^h\frac{h^2-u^2}{u}\mu_\varphi(\hat{S}(\zeta,u))du.
\end{align*}
\end{theorem}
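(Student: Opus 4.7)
The plan is to invoke Proposition~\ref{aver1} with the linear fractional map $f(z)=h/(\zeta-z)$, whose modulus level sets are exactly the circular arcs $\{z\in D:|z-\zeta|=h/t\}$ that foliate $S(\zeta,h)$. Although $f$ has a pole at $\zeta\in\partial D$ and is not in $H^\infty$, its reciprocal $g(z)=(\zeta-z)/h$ is, so the set $G_f(t):=\{m\in\mathfrak{M}_\infty:|\hat{g}(m)|\leqslant 1/t\}$ is well defined, and on $D$ it equals $\overline{S(\zeta,h/t)}\cap D$. Continuity of $\hat{g}$ on $\mathfrak{M}_\infty$ together with density of $D$ then yield $G_f(t)=\hat{S}(\zeta,h/t)$ for all but countably many $t$, which is enough for integration purposes. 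The proof of Proposition~\ref{aver1} extends to this $f$ because the test function $G=H\comp|f|$ that enters Stanton's formula stays bounded (thanks to the truncation built into $H$); alternatively one may approximate by the bounded maps $f_\varepsilon(z)=h/((1+\varepsilon)\zeta-z)$ and let $\varepsilon\to 0^+$.

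After normalizing so that $ad-bc=1$, one reads off $|cz+d|^2=|\zeta-z|^2/h=h/t^2$ on $L_f(t)$, which simplifies $J_t$ to $\frac{t}{2\pi h}\int_{L_f(t)}N_\varphi\,ds$. The hypothesis $\varphi(0)\notin S(\zeta,h)$ forces $|f(\varphi(0))|\leqslant 1$, putting us in case~(ii) of Proposition~\ref{aver1} for every $t>1$. Case~(ii) then gives, after the substitution $u=h/s$,
\begin{equation*}
J_t=\int_t^\infty\frac{\mu_\varphi(\hat{S}(\zeta,h/s))}{s}\,ds=\int_0^{h/t}\frac{\mu_\varphi(\hat{S}(\zeta,u))}{u}\,du.
\end{equation*}

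To close the argument, slice $S(\zeta,h)\cap D$ by the arcs $L_f(h/u)$ using polar coordinates centred at $\zeta$. Since $dv=dA/\pi$ and $\int_{L_f(t)}N_\varphi\,ds=2\pi hJ_t/t$,
\begin{equation*}
\int_{S(\zeta,h)}N_\varphi\,dv=\frac{1}{\pi}\int_0^h\int_{L_f(h/u)}N_\varphi\,ds\,du=2\int_0^h u\,J_{h/u}\,du.
\end{equation*}
Inserting the formula for $J_{h/u}$ and exchanging the order of integration by Fubini,
\begin{equation*}
2\int_0^h u\int_0^u\frac{\mu_\varphi(\hat{S}(\zeta,v))}{v}\,dv\,du=2\int_0^h\frac{\mu_\varphi(\hat{S}(\zeta,v))}{v}\cdot\frac{h^2-v^2}{2}\,dv,
\end{equation*}
which is the claimed identity.

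The main obstacle is the technical justification that Proposition~\ref{aver1} applies to a linear fractional map with a pole on $\partial D$; once that point is handled (either by reading its proof with the bounded function $H\comp|f|$ in place of $|f|$, or by the $\varepsilon$-truncation above), the rest is a change of variable and Fubini. The Carleson bound $\mu_\varphi(\hat{S}(\zeta,v))\leqslant Cv$ derived in Section~3 ensures integrability of the final expression near $v=0$.
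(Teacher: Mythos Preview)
Your argument is correct and lands on exactly the same intermediate formula the paper does, namely that the arc average
\[
J_t\;=\;\frac{1}{2\pi t}\int_{\{|z-\zeta|=t\}}N_\varphi\,ds\;=\;\int_0^{t}\frac{\mu_\varphi(\hat S(\zeta,u))}{u}\,du,
\]
after which the double integration in $t$ and Fubini finish the proof identically. The difference is purely in the choice of $f$ fed into Proposition~\ref{aver1}. You take $f(z)=h/(\zeta-z)$, which makes $G_f(t)=\hat S(\zeta,h/t)$ directly and puts you in case~(ii), but at the cost of an unbounded $f$ with a pole on $\partial D$; this is the technical wrinkle you correctly flag and propose to handle by truncation or by approximating with $f_\varepsilon$. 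The paper instead takes $f(z)=z-\zeta$, which is bounded on $D$, so Proposition~\ref{aver1} applies verbatim. Here $G_f(t)=\mathfrak M_\infty\setminus\hat S(\zeta,t)$ is the \emph{complement} of the Carleson window and $\varphi(0)\in G_f(h)$ forces case~(i); one then subtracts the case~(i) formulas for two radii $h'<h$, uses $\mu_\varphi(G_f(u))=1-\mu_\varphi(\hat S(\zeta,u))$ so that the $\log(h'/h)$ terms cancel, and lets $h'\to 0$. The payoff of the paper's choice is that no extension of Proposition~\ref{aver1} is needed; the payoff of yours is that case~(ii) gives the answer in one line without the subtraction trick. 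Either way, the remaining Fubini step is identical.
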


\begin{proof}
Take $f(z)=z-\zeta$ in Proposition \ref{aver1}, then $G_f(h)=\mathfrak{M}_{\infty}\backslash\hat{S}(\zeta,h)$. So
$$J_h=\frac{1}{2\pi h}\int_{\{z\in D : |z-\zeta|=h\}}N_\varphi(z)ds(z).$$
Now if we take $h'\in(0,h)$, then by Proposition \ref{aver1} we have
\begin{align*}
J_{h'}-J_h&=\log\frac{h'}{h}+\int_{h'}^h\frac{\mu_\phi(G_f(u))}{u}du\\
&=-\int_{h'}^h\frac{\mu_\phi(\hat{S}(\zeta,u))}{u}du.
\end{align*}
Since $\lim_{|z|\to1}N_\varphi(z)=0$, so $J_{h'}\to 0$ as $h'\to 0$. Therefore we have

$$J_h=\int_0^h\frac{\mu_\varphi\left(\hat{S}(\zeta,u)\right)}{u}du.$$
And then
\begin{align*}
\int_{S(\zeta,h)}N_\varphi(z)dv(z)=&2\int_0^htJ_tdt\\
=&2\int_0^hdt\int_0^t\frac{t\cdot\mu_\varphi(\hat{S}(\zeta,u))}{u}du\\
=&\int_0^h\frac{h^2-u^2}{u}\mu_\varphi(\hat{S}(\zeta,u))du.
\end{align*}
\end{proof}

As a corollary, Theorem B (Theorem 4.2 in \cite{MathA}) can be got easily from our Theorem \ref{coro1}, with much better constants.

\begin{corollary}
Suppose $\varphi$ is an analytic self-map of $D$. If $\varphi(0)\notin S(\zeta,2h)$, then
$$\mu_\varphi(\hat{S}(\zeta,h))<\frac{2}{v(S(\zeta,2h))}\int_{S(\zeta,2h)}N_\varphi(z)dv(z)$$
\end{corollary}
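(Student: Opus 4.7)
The plan is to read off the corollary directly from Theorem \ref{coro1} via two elementary estimates: a monotonicity lower bound on the integral side, and a geometric upper bound on $v(S(\zeta,2h))$.

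First, I would apply Theorem \ref{coro1} with $h$ replaced by $2h$ (the hypothesis $\varphi(0)\notin S(\zeta,2h)$ is exactly what is needed), obtaining
\begin{align*}
\int_{S(\zeta,2h)}N_\varphi(z)\,dv(z)=\int_0^{2h}\frac{4h^2-u^2}{u}\,\mu_\varphi\bigl(\hat{S}(\zeta,u)\bigr)\,du.
\end{align*}
Since $u\mapsto\mu_\varphi(\hat{S}(\zeta,u))$ is nondecreasing in $u$, I would discard the piece $u\in(0,h)$ and replace $\mu_\varphi(\hat{S}(\zeta,u))$ by $\mu_\varphi(\hat{S}(\zeta,h))$ on $[h,2h]$, which yields
\begin{align*}
\int_{S(\zeta,2h)}N_\varphi(z)\,dv(z)\geqslant\mu_\varphi\bigl(\hat{S}(\zeta,h)\bigr)\cdot\int_h^{2h}\frac{4h^2-u^2}{u}\,du.
\end{align*}
A direct computation gives $\int_h^{2h}\frac{4h^2-u^2}{u}\,du=4h^2\log 2-\tfrac{3}{2}h^2=h^2(4\log 2-\tfrac{3}{2})$, and numerically $4\log 2-\tfrac{3}{2}>1$.

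Next I would bound $v(S(\zeta,2h))$ from above. Since $\zeta\in\partial D$, the tangent line to $\partial D$ at $\zeta$ separates $D$ from the half of the Euclidean disc $\{z:|z-\zeta|<2h\}$ lying outside $D$, so $S(\zeta,2h)$ is contained in a half-disc of radius $2h$. Hence the (unnormalized) area of $S(\zeta,2h)$ is at most $\tfrac{1}{2}\pi(2h)^2=2\pi h^2$, and after dividing by $\pi$ we get $v(S(\zeta,2h))\leqslant 2h^2$, i.e.\ $h^2\geqslant\tfrac{1}{2}v(S(\zeta,2h))$.

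Combining the two bounds:
\begin{align*}
\int_{S(\zeta,2h)}N_\varphi(z)\,dv(z)\geqslant\mu_\varphi\bigl(\hat{S}(\zeta,h)\bigr)\cdot(4\log 2-\tfrac{3}{2})\cdot\tfrac{1}{2}v(S(\zeta,2h))>\tfrac{1}{2}\,\mu_\varphi\bigl(\hat{S}(\zeta,h)\bigr)\cdot v(S(\zeta,2h)),
\end{align*}
which rearranges to the asserted inequality. None of the steps presents a genuine obstacle; the only point requiring mild care is making sure the strict inequality is preserved, which follows from the strict inequality $4\log 2-\tfrac{3}{2}>1$ (so even if $\mu_\varphi(\hat{S}(\zeta,h))=0$ the statement is vacuous, and otherwise the gap $4\log 2-\tfrac{3}{2}-1>0$ gives strictness).
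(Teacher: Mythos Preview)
Your proof is correct and follows essentially the same route as the paper: apply Theorem~\ref{coro1} at radius $2h$, restrict the integral to $[h,2h]$, use monotonicity of $u\mapsto\mu_\varphi(\hat S(\zeta,u))$, and combine with the elementary bound $v(S(\zeta,2h))<2h^2$. The only cosmetic difference is that you compute $\int_h^{2h}\frac{4h^2-u^2}{u}\,du=h^2(4\log 2-\tfrac32)$ explicitly and draw the strict inequality from $4\log 2-\tfrac32>1$, whereas the paper draws it from the strict bound $v(S(\zeta,2h))<2h^2$; note that in either version the strict inequality tacitly assumes the right-hand side is positive (equivalently $\mu_\varphi(\hat S(\zeta,h))>0$), which the paper also leaves implicit.
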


\begin{proof}

It is easy to see that $v(S(\zeta,2h))$ is less than $2h^2$, so according to Theorem \ref{coro1},

\begin{align*}
\frac{1}{v(S(\zeta,2h))}\int_{S(\zeta,2h)}N_\varphi(z)dv(z)&>\frac{1}{2h^2}\int_h^{2h}\frac{4h^2-u^2}{u}\mu_\varphi(\hat{S}(\zeta,u))du\\
&\geqslant\frac{\mu_\varphi(\hat{S}(\zeta,h))}{2h^2}\int_h^{2h}\frac{4h^2-u^2}{u}du\\
&>\frac{\mu_\varphi(\hat{S}(\zeta,h))}{2}.
\end{align*}

\end{proof}

\section{average of $\frac{N_\varphi(z)}{-\log|z|}$ near the boundary}

As we have mentioned in Section 1, the behavior of $\frac{N_\varphi(z)}{-\log|z|}$ near the boundary should be more closely related to the pull-back measure $\mu_\varphi$. Since $\frac{-\log|z|}{1-|z|}\to 1$ as $|z|\to1$, we may use the function $\frac{2N_\varphi(z)}{1-|z|^2}$ instead of $\frac{N_\varphi(z)}{-\log|z|}$ in some cases, for the sake of simplifying the calculations. The main result of this section is Theorem \ref{aver2}.

To investigate the relationship between the function $\frac{N_\varphi(z)}{1-|z|^2}$ and measure $\mu_\varphi$, it is more convenience to use the Carleson windows of the form $G_F(\alpha)$, other than $W(\zeta,h)$ or $S(\zeta,h)$. Recall that for a Borel set $F\subset\partial D$, we set $L_F(\alpha)=\{z\in D : \hat{\chi}_F(z)=\alpha\}$ and $G_F(\alpha)=\{m\in\mathfrak{M}_{\infty} : \hat{\chi}_F(z)\geqslant\alpha\}$.

Before we can prove Theorem \ref{aver2}, we need the following two lemmas.

\begin{lemma}\label{lemma1}
Let $F$ be an arc on $\partial D$. Then for any $\alpha\in(0,1)$, on $L_F(\alpha)$ one has
$$\left|\frac{\partial\hat{\chi}_F}{\partial z}\right|=\frac{\sin\pi\alpha}{\pi(1-|z|^2)}.$$
\end{lemma}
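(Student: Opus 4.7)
The plan is to exploit conformal invariance by transporting the problem to the upper half-plane via a M\"obius map. Let $\zeta_1,\zeta_2$ be the endpoints of $F$, and choose a M\"obius transformation $\Phi:D\to\mathbb H$ with $\Phi(\zeta_1)=0$, $\Phi(\zeta_2)=\infty$, and $\Phi(F)=[0,\infty)$. Then $\hat{\chi}_F=\omega\circ\Phi$, where $\omega(w)=1-\frac{1}{\pi}\arg w$ is the harmonic measure of $[0,\infty)$ in $\mathbb H$ (with $\arg$ taken in $(0,\pi)$). This realizes $\hat{\chi}_F$ as a very simple harmonic function composed with an explicit holomorphic map, and it makes the level sets transparent: $\omega(w)=\alpha$ exactly when $\arg w=\pi(1-\alpha)$, so $\Phi(L_F(\alpha))$ is the open ray through the origin at angle $\pi(1-\alpha)$.

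Next I would compute the two factors separately. Writing $\omega=\mathrm{Re}\bigl(1+\tfrac{i}{\pi}\log w\bigr)$ immediately yields $\bigl|\partial\omega/\partial w\bigr|=\frac{1}{2\pi|w|}$, and the chain rule gives
\[
\Bigl|\frac{\partial\hat{\chi}_F}{\partial z}\Bigr|=\frac{|\Phi'(z)|}{2\pi|\Phi(z)|}.
\]
To bring in the factor $1-|z|^2$, I would invoke the conformal invariance of the hyperbolic metric: since $\Phi$ is an isometry between $\bigl(D,\tfrac{2|dz|}{1-|z|^2}\bigr)$ and $\bigl(\mathbb H,\tfrac{|dw|}{\mathrm{Im}\,w}\bigr)$, one has the pointwise identity $|\Phi'(z)|=\frac{2\,\mathrm{Im}\,\Phi(z)}{1-|z|^2}$.

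Finally, on $L_F(\alpha)$ the ray description of $\Phi(z)$ yields $\mathrm{Im}\,\Phi(z)=|\Phi(z)|\sin\pi(1-\alpha)=|\Phi(z)|\sin\pi\alpha$, so
\[
\frac{|\Phi'(z)|}{|\Phi(z)|}=\frac{2\sin\pi\alpha}{1-|z|^2},
\]
and plugging this into the formula for $|\partial\hat{\chi}_F/\partial z|$ gives exactly $\frac{\sin\pi\alpha}{\pi(1-|z|^2)}$, as required.

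The only real obstacle I foresee is bookkeeping around branches and orientations: one must verify that the chosen $\Phi$ really sends $F$ onto $[0,\infty)$ rather than $(-\infty,0]$, and that the branch of $\arg$ underlying the formula from Lemma~\ref{geo} agrees with the one used in $\omega$, so that the level set $L_F(\alpha)$ is the ray at angle $\pi(1-\alpha)$ rather than its reflection. Once that is settled, everything reduces to the two clean computations above, and no estimates are required. (As a sanity check one can verify the formula independently by differentiating the explicit representation $\hat{\chi}_F(z)=\frac{1}{\pi}\arg\frac{\zeta_2-z}{\zeta_1-z}+\mathrm{const}$ from the proof of Lemma~\ref{geo}, which yields $\bigl|\partial\hat{\chi}_F/\partial z\bigr|=\frac{|\zeta_1-\zeta_2|}{2\pi|z-\zeta_1||z-\zeta_2|}$, and then comparing with $\mathrm{Im}\,\Phi(z)/|\Phi(z)|^2$ on $L_F(\alpha)$.)
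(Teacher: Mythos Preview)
Your argument is correct and is in fact slightly more streamlined than the paper's. The paper stays inside $D$ throughout: normalizing $F=\{e^{i\theta}:\theta_0<\theta<\pi-\theta_0\}$, it first differentiates the Poisson integral directly to obtain
\[
\frac{\partial\hat{\chi}_F}{\partial z}=\frac{\cos\theta_0}{\pi i}\,\frac{1}{(e^{i\theta_0}-z)(e^{-i\theta_0}+z)},
\]
then checks the desired identity in the special case where $L_F(\alpha)$ is the chord joining the endpoints of $F$ (namely $\alpha=\tfrac12+\tfrac{\theta_0}{\pi}$, where one sees $(e^{i\theta_0}-z)(e^{-i\theta_0}+z)=1-|z|^2$ on that segment), and finally reduces the general $\alpha$ to this special case via a disk automorphism $\phi$ fixing $\pm i$, using $\hat{\chi}_F=\hat{\chi}_{\tilde F}\circ\phi$ together with $|\phi'(z)|=(1-|\phi(z)|^2)/(1-|z|^2)$.

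Your transport to the half-plane treats all $\alpha$ at once, because on $\mathbb H$ the level sets of $\omega=1-\tfrac{1}{\pi}\arg w$ are visibly rays and the hyperbolic density is the explicit $|dw|/\mathrm{Im}\,w$; this replaces the paper's two-step ``base case $+$ reduction'' by a single clean computation. On the other hand, the paper's explicit closed form for $\partial\hat{\chi}_F/\partial z$ is reused in the proof of Lemma~\ref{lemma2}, so that direct calculation is not wasted effort there. The branch/orientation worry you flag is harmless here: a sign slip would at worst swap $\alpha\leftrightarrow 1-\alpha$, and $\sin\pi\alpha$ is invariant under that.
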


\begin{proof}
Without loss of generality, we may assume that $F=\{\text{e}^{i\theta} : \theta_0<\theta<\pi-\theta_0\}$ where $\theta\in(-\frac{\pi}{2},\frac{\pi}{2})$. Then
$$\hat{\chi}_F(z)=\int_{\theta_0}^{\pi-\theta_0}P_{z}(\text{e}^{i\theta})\frac{d\theta}{2\pi},$$
where the Possion kernel $P_{z}(\text{e}^{i\theta})=\text{Re}\frac{\text{e}^{i\theta}+z}{\text{e}^{i\theta}-z}.$ Notice that $\frac{\partial P_{z}(\text{e}^{i\theta}) }{\partial z}=\frac{\text{e}^{i\theta}}{(\text{e}^{i\theta}-z)^2}$, thus
\begin{align*}
\frac{\partial\hat{\chi}_F}{\partial z}&=\int_{\theta_0}^{\pi-\theta_0}\frac{\text{e}^{i\theta}}{(\text{e}^{i\theta}-z)^2}\frac{d\theta}{2\pi}\nonumber\\
&=\frac{\cos\theta_0}{\pi i}\frac{1}{(\text{e}^{i\theta_0}-z)(\text{e}^{-i\theta}+z)}.
\end{align*}
If $\alpha=\frac{1}{2}+\frac{\theta_0}{\pi}$, such that $L_F(\alpha)$ is the segment joint $-\text{e}^{-i\theta_0}$ and $\text{e}^{i\theta_0}$, we have $$(\text{e}^{i\theta_0}-z)(\text{e}^{-i\theta_0}+z)=1-|z|^2$$
for $z\in L_F(\alpha)$. So
$$\frac{\partial\hat{\chi}_F}{\partial z}=\frac{\cos\theta_0}{\pi i}\frac{1}{1-|z|^2}$$
on $L_F(\alpha)$.

For the general $\alpha\in(0,1)$, we may take $\phi$ to be the automorphism with fixed points $\pm i$ such that $\phi$ maps $-\text{e}^{-i\theta_0}$ and $\text{e}^{i\theta_0}$ to $-\text{e}^{-i\tilde{\theta}_0}$ and $\text{e}^{i\tilde{\theta}_0}$ respectively, where $\tilde{\theta}_0=\pi\alpha-\pi/2$. Let $\tilde{F}=\{\text{e}^{i\theta} : \tilde{\theta}_0<\theta<\pi-\tilde{\theta}_0\}$. Then $\phi(L_F(\alpha))=L_{\tilde{F}}(\alpha)$, which is a segment. So if we write $w=\phi(z)$, then for $z\in L_F(\alpha)$,
\begin{align*}
 \left|\frac{\partial\hat{\chi}_{F}}{\partial z}(z)\right|=&\left|\frac{\partial\hat{\chi}_{\tilde{F}}}{\partial w}(w)\right|\cdot|\varphi'(z)|\\
 =&\frac{\cos\tilde{\theta}_0}{\pi(1-|w|^2)}\cdot\frac{1-|w|^2}{1-|z|^2}\\
 =&\frac{\sin\pi\alpha}{\pi(1-|z|^2)},
\end{align*}
where the first equality follows from the fact that $\hat{\chi}_{F}=\hat{\chi}_{\tilde{F}}\comp\phi$.
\end{proof}

\begin{lemma}\label{lemma2}
Let $F$ be an arc on $\partial D$. Then for any non-negative function $g\in L^1(D)$, we have
$$\int_Dgdv=\int_0^1d\alpha\int_{L_F(\alpha)}\frac{g(z)(1-|z|^2)}{2\sin\pi\alpha}ds(z).$$

\end{lemma}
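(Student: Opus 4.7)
The plan is to recognize this identity as the coarea formula applied to the smooth harmonic function $\hat{\chi}_F$ on $D$, whose level sets are exactly the curves $L_F(\alpha)$. Recall that $v$ denotes the normalized Lebesgue measure on $D$, so $dv = dA/\pi$, where $dA$ is the ordinary planar area element. The standard coarea formula reads
\begin{align*}
\int_D g\, dA = \int_0^1 \left(\int_{L_F(\alpha)} \frac{g(z)}{|\nabla \hat{\chi}_F(z)|}\, ds(z)\right) d\alpha,
\end{align*}
so the task reduces to computing $|\nabla \hat{\chi}_F|$ on each level set and keeping track of the normalization factor.

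Because $\hat{\chi}_F$ is real-valued, one has $|\nabla \hat{\chi}_F| = 2\,|\partial \hat{\chi}_F/\partial z|$, and Lemma \ref{lemma1} then gives
\begin{align*}
|\nabla \hat{\chi}_F(z)| = \frac{2\sin\pi\alpha}{\pi(1-|z|^2)} \quad \text{for } z\in L_F(\alpha).
\end{align*}
In particular the gradient never vanishes on $D$, so every level set is a genuine smooth $1$-manifold and the coarea identity applies without degeneracy. Substituting the above into the coarea formula and dividing through by $\pi$ to account for the normalization produces exactly
\begin{align*}
\int_D g\, dv = \int_0^1 d\alpha \int_{L_F(\alpha)} \frac{g(z)(1-|z|^2)}{2\sin\pi\alpha}\, ds(z),
\end{align*}
which is the conclusion.

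The main technical point to address is justifying the coarea formula globally on $D$, since $\hat{\chi}_F$ is not Lipschitz up to $\partial D$ in general. I would handle this by exhausting $D$ by the subdisks $D_r = \{|z|<r\}$, on each of which $\hat{\chi}_F$ is smooth with bounded gradient so that the classical statement of coarea applies directly, and then passing to the limit $r\to 1$ by monotone convergence for $g\geqslant 0$. As a self-contained alternative, one can invoke the M\"obius-invariance relation $\hat{\chi}_{\phi(F)}=\hat{\chi}_F\circ\phi^{-1}$ to reduce to the symmetric case where $L_F(1/2)$ is a diameter: the family of level arcs then admits an explicit parametrization (circular arcs through the endpoints of $F$, meeting $\partial D$ at angle $\pi(1-\alpha)$ by Lemma \ref{geo}) and the identity becomes a routine change of variables whose Jacobian is provided by Lemma \ref{lemma1}.
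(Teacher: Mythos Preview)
Your argument is correct. Recognizing the identity as an instance of the coarea formula, using Lemma~\ref{lemma1} to supply $|\nabla\hat\chi_F|$ on each level curve, and handling the lack of global Lipschitz control by exhausting $D$ with compact subdisks and invoking monotone convergence all go through as stated. The normalization bookkeeping ($dv=dA/\pi$ versus the factor $\pi$ coming from $|\nabla\hat\chi_F|$) is also correct.

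The paper takes a different, more hands-on route: it fixes the explicit conformal map $\tau(z)=\dfrac{1+e^{i\theta_0}z}{e^{i\theta_0}-z}$ sending $D$ onto the right half-plane, under which each $L_F(\alpha)$ becomes a ray from the origin at angle $\pi\alpha-\pi/2$, and then performs an ordinary polar change of variables in the half-plane; the resulting Jacobian factor is identified with $(1-|z|^2)/(2\sin\pi\alpha)$ via the computation already done in the proof of Lemma~\ref{lemma1}. Your coarea approach is more conceptual and makes transparent why the weight $(1-|z|^2)/\sin\pi\alpha$ appears (it is simply $1/|\nabla\hat\chi_F|$ up to normalization), while the paper's explicit map avoids citing the coarea theorem and makes the smoothness and non-degeneracy issues disappear automatically since the change of variables is a global diffeomorphism. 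The alternative you sketch at the end---reducing by M\"obius invariance and parametrizing the arcs explicitly---is in the same spirit as the paper's method, though the paper works directly with $\tau$ rather than first normalizing $F$.
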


\begin{proof}
Assume that $F=\{\text{e}^{i\theta} : \theta_0<\theta<\pi-\theta_0\}.$ Let $\tau(z)=\frac{1+\text{e}^{i\theta_0}z}{\text{e}^{i\theta_0}-z}$, then $\tau$ maps $D$ onto the half-plane $\Pi^+=\{z : \text{Re}z>0\}$, and $\tau(L_F(\alpha))=\{r\text{e}^{i\theta} : \theta=\pi\alpha-\pi/2\}$. Therefore,

\begin{align*}
 \int_D g dv=&\frac{1}{\pi}\int_{-\frac{\pi}{2}}^{\frac{\pi}{2}} d\theta\int_0^\infty  g\comp\tau^{-1}(r\text{e}^{i\theta})\cdot\left|(\tau^{-1})'(r\text{e}^{i\theta})\right|^2\cdot rdr\\
 =&\int_0^1 d\alpha\int_{L_F(\alpha)}  g\cdot\omega,
\end{align*}
where
\begin{align*}
 \omega=&\frac{1}{2}\left|\tau'(z)\right|^{-2}d|\tau(z)|^2\\
 =&\frac{|(1+\text{e}^{i\theta}z)(\text{e}^{i\theta}-z)|}{2\cos\theta_0}ds(z)
\end{align*}
on $L_F(\alpha)$. According to the proof of Lemma \ref{lemma1}, for $z\in L_F(\alpha)$ we have
\begin{align*}
\frac{|(1+\text{e}^{i\theta}z)(\text{e}^{i\theta}-z)|}{2\cos\theta_0}=\frac{\pi}{2}\left|\frac{\partial\hat{\chi}_{F}}{\partial z}(z)\right|^{-1}=\frac{(1-|z|^2)}{2\sin\pi\alpha}.
\end{align*}
So
\begin{align*}
\int_{L_F(\alpha)} g\cdot\omega=&\frac{1}{2\sin\pi\alpha}\int_{L_F(\alpha)}g(z)(1-|z|^2)ds(z),
\end{align*}
hence
$$\int_Dgdv=\int_0^1d\alpha\int_{L_F(\alpha)}\frac{g(z)(1-|z|^2)}{2\sin\pi\alpha}ds(z).$$
\end{proof}

The following theorem states that the average of the function $\frac{2N_\varphi(z)}{1-|z|^2}$ on Carleson window $G_F(\alpha)$ equals to the average of the pull-back measures of $G_F(u)$ for $u\in(\alpha,1)$.

\begin{theorem}\label{aver2}
Suppose $\varphi$ an analytic self-map of $D$ and $F$ is an arc on $\partial D$. Then for $\alpha>\hat{\chi}_F(\varphi(0))$ we have
$$\int_{L_F(\alpha)}\frac{N_\varphi(z)}{1-|z|^2}ds(z)=\frac{\pi^2}{\sin\pi\alpha}\int_{\alpha}^1\mu_{\varphi}\left(G_F(u)\right)du.$$
\end{theorem}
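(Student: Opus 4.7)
The plan is to mimic the proof of Proposition \ref{aver1}, but with the harmonic test function $\hat{\chi}_F$ in place of $|f|$. Specifically, I would apply Stanton's formula (\ref{Stan}) to the subharmonic function $G:=H\comp\hat{\chi}_F$ on $D$, where $H(t)=(t-\alpha)_+$. Since $\alpha>\hat{\chi}_F(\varphi(0))$, the term $G(\varphi(0))=0$ drops out. For the boundary-integral side of Stanton's formula, the continuity of $H\comp\hat{\chi}_F$ on $\mathfrak{M}_{\infty}$ together with the definition of $\mu_\varphi$ (exactly as in Proposition \ref{aver1}) give
$$\lim_{r\to 1}\int_{\partial D}G\bigl(\varphi(r\mathrm{e}^{i\theta})\bigr)\,\frac{d\theta}{2\pi}=\int_{\mathfrak{M}_\infty}(\hat{\chi}_F-\alpha)_+\,d\mu_\varphi,$$
which, by the layer-cake identity (\ref{Rudin}), equals $\int_\alpha^1\mu_\varphi(G_F(u))\,du$.

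The heart of the argument is the evaluation of $\frac{1}{2}\int_D\Delta G\cdot N_\varphi\,dv$. Since $\hat{\chi}_F$ is harmonic on $D$, the chain rule yields $\Delta G=H''(\hat{\chi}_F)|\nabla\hat{\chi}_F|^2$, and with $H''=\delta_\alpha$ this is a Radon measure supported on the level curve $L_F(\alpha)$. To compute it cleanly I would first regularize $H$ by a smooth convex $H_\varepsilon$, use Lemma \ref{lemma2} to reparameterize the area integral through the level sets of $\hat{\chi}_F$, and insert the identity $|\nabla\hat{\chi}_F|=2\sin(\pi\alpha')/(\pi(1-|z|^2))$ on $L_F(\alpha')$ from Lemma \ref{lemma1}. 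A short calculation turns $\frac{1}{2}\int_D H_\varepsilon''(\hat{\chi}_F)|\nabla\hat{\chi}_F|^2 N_\varphi\,dv$ into
$$\int_0^1 H''_\varepsilon(\alpha')\cdot\frac{\sin\pi\alpha'}{\pi^2}\int_{L_F(\alpha')}\frac{N_\varphi(z)}{1-|z|^2}\,ds(z)\,d\alpha',$$
and letting $\varepsilon\to 0$ picks out $\alpha'=\alpha$, collapsing the expression to $\frac{\sin\pi\alpha}{\pi^2}\int_{L_F(\alpha)}\frac{N_\varphi(z)}{1-|z|^2}\,ds(z)$.

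Combining the three pieces via Stanton's formula produces
$$\int_\alpha^1\mu_\varphi(G_F(u))\,du=\frac{\sin\pi\alpha}{\pi^2}\int_{L_F(\alpha)}\frac{N_\varphi(z)}{1-|z|^2}\,ds(z),$$
which rearranges to the claim. The main obstacle is justifying the $\varepsilon\to 0$ passage in the Laplacian integral: on the $\mu_\varphi$-side this is immediate from $|H_\varepsilon-H|\leqslant\varepsilon$ and the finiteness of $\mu_\varphi$, but on the $N_\varphi$-side one needs continuity of $\alpha'\mapsto\int_{L_F(\alpha')}\frac{N_\varphi(z)}{1-|z|^2}\,ds(z)$ at $\alpha'=\alpha$ so that the weak convergence $H_\varepsilon''\to\delta_\alpha$ pairs correctly. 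This continuity follows from the sub-mean-value property of $N_\varphi$, invoked in exactly the same manner as the continuity of $J_u$ in the proof of Proposition \ref{aver1}.
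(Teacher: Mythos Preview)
Your proposal is correct and follows essentially the same route as the paper: apply Stanton's formula to a piecewise-linear function of $\hat{\chi}_F$, use Lemma~\ref{lemma2} as a co-area formula and Lemma~\ref{lemma1} for $|\nabla\hat{\chi}_F|$, and evaluate the boundary side via the layer-cake identity (\ref{Rudin}); the paper merely uses a two-sided cutoff $I$ (with $I''=\delta_{\alpha_1}-\delta_{\alpha_2}$) and then sends $\alpha_2\to1$, whereas you work directly with $H(t)=(t-\alpha)_+$ and a mollification. The one place where the paper is more explicit than your sketch is the continuity of $\alpha'\mapsto J_{\alpha'}$: because the level curves here are arcs (not full circles) and carry the unbounded weight $1/(1-|z|^2)$, the paper passes to a half-plane and then a strip via conformal maps so that $J_{\alpha'}$ becomes the integral of the subharmonic function $\overline{N}_\varphi\circ\tau^{-1}\circ\eta^{-1}$ along parallel horizontal lines---a small refinement of the ``same as in Proposition~\ref{aver1}'' argument you invoke.
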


\begin{proof}
 Assume that $F=\{\text{e}^{i\theta} : \theta_0<\theta<\pi-\theta_0\}$. Fix $\alpha_1,\alpha_2$ such that $\hat{\chi}_F(\varphi(0))<\alpha_1<\alpha_2<1$, and let

$$I (\alpha) = \left\{ \begin{array}{ll}
     \alpha_1, & 0 \leqslant \alpha < \alpha_1,\\
     \alpha, & \alpha_1 \leqslant \alpha \leqslant \alpha_2,\\
     \alpha_2, & \alpha > \alpha_2.
   \end{array} \right.$$
Then by (\ref{Rudin}),
\begin{align}\label{zuo}
  \int_{\mathfrak{M}_{\infty}} I\comp\hat{\chi}_F d\mu_\varphi= \alpha_1+ \int_{\alpha_1}^{\alpha_2}  \mu_{\varphi}\left(G_F(u)\right)du.
\end{align}
Since $\varphi(0)\notin G_F(\alpha_1),$ then $I\left(|\chi_F\comp\varphi(0)|\right)=\alpha_1$. So we have
\begin{align}\label{you}
\left(|\chi_F\comp\varphi(0)|\right)+\frac{1}{2}\int_D\Delta (I\comp\chi_F)\cdot N_\varphi dv=&\alpha_1+2\int_DI''\comp\chi_F \cdot \left|\frac{\partial\chi_F}{\partial z}\right|^2 N_{\varphi} dv\nonumber\\
=&\alpha_1+\int_0^1 I''(\alpha)\cdot J_\alpha d\alpha\nonumber\\
=&\alpha_1+J_{\alpha_1}-J_{\alpha_2}.
\end{align}
where
\begin{align*}
J_\alpha=&2\int_{L_F(\alpha)}\left|\frac{\partial\chi_F}{\partial z}\right|^2 N_{\varphi}\cdot\frac{1-|z|^2}{2\sin\pi\alpha}ds(z)\\
=&\frac{\sin\pi\alpha}{\pi^2}\int_{L_F(\alpha)}\frac{N_\varphi(z)}{1-|z|^2}ds(z).
\end{align*}
Notice that the last equality of (\ref{you}) relays on that $J_\alpha$ is continuous with respect to $\alpha$. This will be illustrated in the last part of this proof.

By (\ref{Stan}), the equations (\ref{zuo}) and (\ref{you}) should be equal to each other, i.e.,
$$J_{\alpha_1}-J_{\alpha_2}=\int_{\alpha_1}^{\alpha_2}  \mu_{\varphi}\left(G_F(u)\right)du.$$
Note that $\lim_{\alpha_2\to 1}J_{\alpha_2}=0$ since $\frac{N_\varphi(z)}{1-|z|^2}$ is bounded near the boundary. So we have
$$J_{\alpha}=\int_{\alpha}^1\mu_{\varphi}\left(G_F(u)\right)du$$
for $\alpha>\hat{\chi}_F(\varphi(0))$.

Now let's show that $J_\alpha$ is continuous for $\alpha>\hat{\chi}_F(\varphi(0))$. Let $\tau$ be the same map as in the proof of Lemma \ref{lemma2}. Then
$$J_\alpha=\frac{1}{2\pi^2}\int_0^\infty \frac{\overline{N}_\varphi\comp\tau^{-1}(r\text{e}^{i\theta})}{r}dr,$$
where $\theta=\pi\alpha-\pi/2$. Let $\eta(z)=\log z$, which is bi-holomorphic from the half-plane $\Pi^+=\{z : \text{Re}z>0\}$ onto the strip $T=\{z : -\frac{\pi}{2}<\text{Im}z<\frac{\pi}{2}\}$. Then
$$J_\alpha=\frac{1}{2\pi^2}\int_0^\infty\overline{N}_\varphi\comp\tau^{-1}\comp\eta^{-1}(t+i\theta)dt.$$
Then it is clear that $J_\alpha$ is continuous with respect to $\theta$ (as well as $\alpha$), since $\overline{N}_\varphi\comp\tau^{-1}\comp\eta^{-1}$ is subharmonic on $T\backslash\{\eta\comp\tau\comp\varphi(0)\}.$
\end{proof}

As a corollary, we now prove that the average of $\frac{N_\varphi(z)}{-\log|z|}$ on $S(\zeta,h)$ can also be dominated by the pull-back measure. Remark \ref{zh} shows that this corollary is an improvement of Theorem A.

\begin{corollary}\label{zhdkz}
Suppose $\varphi$ is an analytic self-map of $D$. Assume $\zeta\in\partial D$ and $h\in(0,1). $ If $\varphi(0)\notin S(\zeta,h)$, then there exists constant $c_h>1$ depending only on $h$ such that
\begin{align}\label{dom}
\frac{1}{v(S(\zeta,h))}\int_{S(\zeta,h)}\frac{N_\varphi(z)}{-\log|z|}dv(z)\leqslant c_h\cdot\frac{\int_0^h\mu_{\varphi}(\hat{S}(\zeta,t))dt}{\int_0^h\sigma(\hat{S}(\zeta,t))dt}.
\end{align}
Moreover, we can further more require that $\lim_{h\to0}c_h=1$.

\end{corollary}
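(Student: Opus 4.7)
The strategy couples an elementary pointwise bound with a slicing argument built around Theorem \ref{aver2}. On $(0,1)$ one has $-\log x\geqslant\frac{1-x^2}{2}$, so $\frac{1}{-\log|z|}\leqslant\frac{2}{1-|z|^2}$ pointwise on $D$, and it suffices to estimate $\int_{S(\zeta,h)}\frac{2N_\varphi(z)}{1-|z|^2}\,dv(z)$.

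Switching to polar coordinates $z=\zeta+re^{i\theta}$ centered at $\zeta$, this integral equals
\[
\frac{1}{\pi}\int_0^h\left(\int_{\{|z-\zeta|=r\}\cap D}\frac{2N_\varphi(z)}{1-|z|^2}\,ds(z)\right)dr.
\]
The key geometric identification, provided by Lemma \ref{geo}, is that if $F_r=\partial D\cap S(\zeta,r)$ and $\alpha_r=\frac{1}{2}+\frac{\arcsin(r/2)}{\pi}$, then $S(\zeta,r)\cap D=G_{F_r}(\alpha_r)\cap D$, so $\{|z-\zeta|=r\}\cap D$ coincides with $L_{F_r}(\alpha_r)$. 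The hypothesis $\varphi(0)\notin S(\zeta,h)$ forces $\hat{\chi}_{F_r}(\varphi(0))<\alpha_r$ for every $r\in(0,h]$, so Theorem \ref{aver2} applies and, together with $\sin\pi\alpha_r=\sqrt{1-r^2/4}$, gives
\[
\int_{\{|z-\zeta|=r\}\cap D}\frac{N_\varphi(z)}{1-|z|^2}\,ds(z)=\frac{\pi^2}{\sqrt{1-r^2/4}}\int_{\alpha_r}^{1}\mu_\varphi(G_{F_r}(u))\,du.
\]

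The inclusion $G_{F_r}(u)\subset G_{F_r}(\alpha_r)=\hat{S}(\zeta,r)$ for $u\geqslant\alpha_r$ yields $\mu_\varphi(G_{F_r}(u))\leqslant\mu_\varphi(\hat{S}(\zeta,r))$. Combined with the identity $2(1-\alpha_r)=1-\sigma(\hat{S}(\zeta,r))$ (which just restates $\sigma(\hat{S}(\zeta,r))=\frac{2\arcsin(r/2)}{\pi}$), integration over $r$ produces
\[
\int_{S(\zeta,h)}\frac{N_\varphi(z)}{-\log|z|}\,dv(z)\leqslant\pi\int_0^h\frac{1-\sigma(\hat{S}(\zeta,r))}{\sqrt{1-r^2/4}}\,\mu_\varphi(\hat{S}(\zeta,r))\,dr.
\]
An elementary calculus check shows $r\mapsto\frac{1-\sigma(\hat{S}(\zeta,r))}{\sqrt{1-r^2/4}}$ is decreasing on $[0,1)$ with value $1$ at $r=0$, so it is bounded by $1$ on $[0,h]$. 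Dividing by $v(S(\zeta,h))$ and factoring out $\int_0^h\sigma(\hat{S}(\zeta,r))\,dr$ gives the desired inequality with
\[
c_h=\frac{\pi\int_0^h\sigma(\hat{S}(\zeta,r))\,dr}{v(S(\zeta,h))}.
\]

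The main remaining obstacle, which I expect to be the most tedious bookkeeping, is the verification that $c_h>1$ on $(0,1)$ and $\lim_{h\to 0}c_h=1$. Computing $\mathrm{Area}(S(\zeta,h))=\int_0^h 2\rho\arccos(\rho/2)\,d\rho$ in polar coordinates about $\zeta$, the function $f(h):=\pi\int_0^h\sigma(\hat{S}(\zeta,r))\,dr-v(S(\zeta,h))$ satisfies $f(0)=f'(0)=f''(0)=0$, while
\[
f''(h)=\frac{1}{\pi}\left[2\arcsin(h/2)+\frac{\pi+h}{\sqrt{1-h^2/4}}-\pi\right]
\]
is strictly positive on $(0,1)$ since $\frac{\pi+h}{\sqrt{1-h^2/4}}>\pi$ and $2\arcsin(h/2)>0$ for $h>0$. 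Therefore $f>0$ on $(0,1)$ and $c_h>1$, and a Taylor expansion at $h=0$ yields $c_h=1+\frac{2h}{3\pi}+O(h^2)$, delivering the limit.
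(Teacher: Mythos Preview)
Your proof is correct and follows essentially the same route as the paper: slice $S(\zeta,h)$ into the circular arcs $L_{F_r}(\alpha_r)$, apply Theorem~\ref{aver2} on each arc, bound $\mu_\varphi(G_{F_r}(u))\leqslant\mu_\varphi(\hat S(\zeta,r))$ by inclusion, and finish with the asymptotics $v(S(\zeta,h))\sim h^2/2$ and $\sigma(\hat S(\zeta,r))\sim r/\pi$.

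The difference is one of precision rather than strategy. The paper replaces $\frac{N_\varphi}{-\log|z|}$ by $\frac{2N_\varphi}{1-|z|^2}$ via asymptotic equivalence and then bounds $\frac{1-\alpha_t}{\sin\pi\alpha_t}\leqslant c_h/2$ with an unspecified $c_h\to1$, absorbing all error terms into $c_h$ at the end. You instead use the clean pointwise inequality $-\log|z|\geqslant\frac{1-|z|^2}{2}$, compute $\alpha_r$ and $\sin\pi\alpha_r$ explicitly, observe that the factor $\frac{2(1-\alpha_r)}{\sin\pi\alpha_r}=\frac{1-\sigma(\hat S(\zeta,r))}{\sqrt{1-r^2/4}}$ is bounded by~$1$ on $[0,1)$, and produce the explicit constant $c_h=\pi\int_0^h\sigma(\hat S(\zeta,r))\,dr\big/v(S(\zeta,h))$ together with a direct verification that $c_h>1$ and $c_h\to1$. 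This buys you an actual formula for $c_h$ (and even its first-order expansion), at the cost of the calculus bookkeeping you flagged; the paper's version is shorter but leaves the constant implicit.
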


\begin{proof}
It is sufficient to prove the result to the function $\frac{2N_\varphi(z)}{1-|z|^2}$ instead of $\frac{N_\varphi(z)}{-\log|z|}$.

For $t\in(0,h)$ we have $\hat{S}(\zeta,t)=G_{F_t}(\alpha_t)$ where $F_t$ is the arc $\partial D\cap \overline{S(\zeta,t)}$, and it is easy to see that $\alpha_t\to\frac{1}{2}$ as $t\to0$. So by Theorem \ref{aver2} we have
\begin{align*}
\int_{S(\zeta,h)}\frac{N_\varphi(z)}{1-|z|^2}dv(z)=&\frac{1}{\pi}\int_0^hdt\int_{L_{F_t}(\alpha_t)}\frac{N_\varphi(z)}{1-|z|^2}ds(z)\\
=&\pi\int_0^hdt\int_{\alpha_t}^1\frac{\mu_{\varphi}\left(G_{F_t}(u)\right)}{\sin\pi\alpha_t}du\\
\leqslant&\pi\int_0^h\frac{1-\alpha_t}{\sin\pi\alpha_t}\cdot\mu_{\varphi}(\hat{S}(\zeta,t))dt\\
\leqslant&\frac{c_h\pi}{2}\int_0^h\mu_{\varphi}(\hat{S}(\zeta,t))dt,
\end{align*}
where $\lim_{h\to0}c_h=1$.
Also note that
$$\lim_{h\to0}\frac{V(S(\zeta,h))}{h^2/2}=\lim_{t\to0}\frac{\sigma(\hat{S}(\zeta,t))}{t/\pi}=1,$$
so by taking the constant $c_h$ slight larger, we have
$$\frac{1}{v(S(\zeta,h))}\int_{S(\zeta,h)}\frac{N_\varphi(z)}{-\log|z|}dv(z)\leqslant c_h\cdot\frac{\int_0^h\mu_{\varphi}(\hat{S}(\zeta,t))dt}{\int_0^h\sigma(\hat{S}(\zeta,t))dt}.$$

\end{proof}

\begin{example}
By taking $\varphi$ to be an inner function with $\varphi(0)=0$, we can see estimation (\ref{dom}) is sharp. In fact, by Lemma 3.27 in \cite{Cowen}, $\frac{N_\varphi(z)}{-\log|z|}$ equals $1$ almost everywhere in $D$ in this situation. So the left side of (\ref{dom}) is $1$ for any $\zeta\in\partial D$ and $h\in(0,1)$ such that $\varphi(0)\notin S(\zeta,h)$. On the other hand, by Theorem 3.8 \cite{Cowen} we can know that $C_\varphi$ is an isometry on $H^2$. Therefore we have $\mu_{\varphi}(\hat{S}(\zeta,t))=\sigma(\hat{S}(\zeta,t)).$ So the right side of (\ref{dom}) equals to $c_h$, which converges to $1$.
\end{example}

\begin{remark}\label{zh}
Note that Corollary \ref{zhdkz} is stronger than the results in \cite{MathA} since Theorem A can be got from this corollary with better constants. In fact, let $a=(1-h/2)\zeta$, then $D(a,h/2)\subset S(\zeta,h)$.
According to the sub-mean value property of $N_\varphi$ and the proof of Corollary \ref{zhdkz} we can know that
\begin{align*}
N_\varphi(a)\leqslant&\frac{1}{v(D(a,h/2))}\int_{D(a,h/2)}N_\varphi(z)dv(z)\\
\leqslant&\frac{8}{h}\int_{S(\zeta,h)}\frac{N_\varphi(z)}{1-|z|^2}dv(z)\\
\leqslant&\frac{4\pi c_h}{h}\int_0^h\mu_\varphi(\hat{S}(\zeta,t))dt\\
\leqslant&4\pi c_h\cdot\mu_\varphi(\hat{S}(\zeta,h)).
\end{align*}
 So
\begin{align*}
\sup_{S(\zeta,h)}N_\varphi\leqslant 4\pi c_h\cdot\mu_\varphi(\hat{S}(\zeta,2h)),
\end{align*}
where $\lim_{h\to0}c_h=1$.

\end{remark}

Finally, we'd like to point out that Theorem \ref{aver2} sheds a light on the behavior of the function $\frac{N_\varphi(z)}{-\log|z|}$ on the Shilov boundary $\mathfrak{M}_{L^{\infty}}$: we strongly suspect that  $\frac{N_\varphi(z)}{-\log|z|}$ is well defined and continuous on $\mathfrak{M}_{L^{\infty}}$. More precisely, we have the following conjecture.

\begin{conjecture}
Let $\tau_\varphi(z)=\frac{\overline{N}_\varphi(z)}{-\log|z|}$. Then the followings hold.

(i) Suppose $m\in\mathfrak{M}_{L^{\infty}}$ and $\{z_\alpha\}$ is a net in $D$ such that $z_\alpha\to m$ in $\mathfrak{M}_{\infty}$, then $\tau(z_\alpha)$ converges. So if we define this limitation the value of $\tau_\varphi$ at $m$, then $\tau_\varphi$ is well defined on $\mathfrak{M}_{L^{\infty}}$.

(ii) Moreover, $\tau_\varphi$ is continuous on $\mathfrak{M}_{L^{\infty}}$, and the restriction of $\tau_\varphi$ on $\mathfrak{M}_{L^{\infty}}$ is exactly the Radon-Nikodym derivative of  $\mu_\varphi|_{\mathfrak{M}_{L^{\infty}}}$ with respect to $\sigma$.
\end{conjecture}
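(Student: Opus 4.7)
The plan is to use Theorem \ref{aver2} to bridge the pointwise values of $\tau_\varphi(z)$ as $z\to m\in\mathfrak{M}_{L^\infty}$ with the Radon--Nikodym derivative of $\mu_\varphi|_{\mathfrak{M}_{L^\infty}}$ with respect to $\sigma$. The approach is to first manufacture a canonical continuous version of the Radon--Nikodym derivative, then identify it with the ``boundary value'' of $\tau_\varphi$ through the averaging identity provided by Theorem \ref{aver2}.

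\textbf{Step 1 (absolute continuity and the candidate function).} The estimate $\mu_\varphi(\hat F)\leqslant\frac{1+|\varphi(0)|}{1-|\varphi(0)|}\sigma(\hat F)$ recorded in Section 3 shows that $\mu_\varphi|_{\mathfrak{M}_{L^\infty}}\ll\sigma$, with derivative $h\in L^\infty(\sigma)$. Because $\mathfrak{M}_{L^\infty}$ is the Stone space of the measure algebra on $\partial D$, elements of $L^\infty(\sigma)$ correspond to continuous functions on $\mathfrak{M}_{L^\infty}$, so $h$ admits a unique continuous representative, which becomes the candidate for $\tau_\varphi|_{\mathfrak{M}_{L^\infty}}$.

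\textbf{Step 2 (identification via Theorem \ref{aver2}).} Fix $m\in\mathfrak{M}_{L^\infty}$ and choose a decreasing family of arcs $F_n\subset\partial D$ together with parameters $\alpha_n\to 1$ so that the clopen sets $G_{F_n}(\alpha_n)\cap\mathfrak{M}_{L^\infty}$ form a neighbourhood basis of $m$. Using that $\frac{2(-\log|z|)}{1-|z|^2}\to 1$ as $|z|\to 1$, Theorem \ref{aver2} yields, up to this boundary factor,
\begin{align*}
\frac{1}{|L_{F_n}(\alpha_n)|}\int_{L_{F_n}(\alpha_n)}\frac{N_\varphi(z)}{-\log|z|}ds(z)\sim\frac{\pi^2}{|L_{F_n}(\alpha_n)|\sin\pi\alpha_n}\int_{\alpha_n}^{1}\mu_\varphi(G_{F_n}(u))du.
\end{align*}
The left side is a boundary-concentrated average of $\tau_\varphi$ which should converge to its putative value at $m$; the right side, after renormalising by the $\sigma$-measures of analogous sets, is a ratio $\mu_\varphi(U)/\sigma(U)$ over shrinking clopen neighbourhoods $U$ of $m$, and therefore tends to $h(m)$ by the Lebesgue differentiation theorem transplanted to the Stone space via the isomorphism $L^\infty(\sigma)\cong C(\mathfrak{M}_{L^\infty})$. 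Equating the two limits would establish both (i) and (ii) simultaneously.

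\textbf{Main obstacle.} The principal difficulty is showing that the limit of $\tau_\varphi(z_\alpha)$ is independent of the approaching net $z_\alpha\to m$, including nets approaching $m$ through different Gleason parts of $\mathfrak{M}_{L^\infty}$. The crucial sub-lemma would be that $\tau_\varphi$ is essentially constant on every non-trivial Gleason part $P(m')\subset\mathfrak{M}_{L^\infty}$. Using Proposition \ref{ana} and Hoffman's analytic-disc parametrisation $L_{m'}\colon D\to P(m')$, one would pull $\varphi$ back through $L_{m'}$ and apply Theorem \ref{aver2} intrinsically on this disc; the subharmonicity of $\overline N_\varphi$ together with a uniform $L^\infty$ bound should then force the composed quotient to be bounded harmonic, hence constant on $P(m')$. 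A secondary obstacle is justifying the interchange of the limits $\alpha_n\to 1$ and $F_n\to\{m\}$, which seems to require uniform oscillation estimates for $\tau_\varphi$ near $\mathfrak{M}_{L^\infty}$, plausibly obtained by applying the maximum principle to $\overline N_\varphi$ on the half-discs described by Lemma \ref{geo}.
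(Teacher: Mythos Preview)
The statement you are attempting to prove is presented in the paper as a \emph{conjecture}, not a theorem; the paper does not supply a proof. What the paper does provide is the conditional implication (i)~$\Rightarrow$~(ii): assuming (i), one uses Theorem~\ref{aver2} to compute $\lim_{\alpha\to1}\int_{L_F(\alpha)}\tau_\varphi\,ds=2\pi\mu_\varphi(\hat F)$, then invokes the existence of radial limits (via upper semi-continuity and an argument parallel to Axler--Shields) to rewrite this limit as $\int_F\tau_\varphi(\mathrm e^{i\theta})\,d\theta$, yielding $\int_{\hat F}\tau_\varphi\,d\sigma=\mu_\varphi(\hat F)$; the identification $L^\infty(\mathfrak M_{L^\infty},d\sigma)=C(\mathfrak M_{L^\infty})$ then gives (ii). Your Step~1 and the right-hand side of your Step~2 reproduce exactly this motivation, so on the conditional part your approach and the paper's are essentially the same.

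The genuine gap is in part~(i), and here your proposal does not close it. You correctly isolate the obstacle --- net-independence of $\lim\tau_\varphi(z_\alpha)$ --- but the sketched resolution via Gleason parts does not work as stated. Pulling back through $L_{m'}$ and Proposition~\ref{ana} gives $\varphi\circ L_{m'}=L_{\varphi(m')}\circ\phi$, but there is no mechanism by which ``subharmonicity of $\overline N_\varphi$ together with a uniform $L^\infty$ bound'' forces the quotient $\tau_\varphi\circ L_{m'}$ to be harmonic, let alone constant: a bounded subharmonic function need not be harmonic, and $\tau_\varphi$ is a \emph{ratio} of a subharmonic function and $-\log|z|$, which has no obvious subharmonicity on the analytic disc $P(m')$. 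Moreover, even if $\tau_\varphi$ were constant on each nontrivial Gleason part of $\mathfrak M_{L^\infty}$, this would not by itself establish convergence along nets that approach $m$ through \emph{trivial} (one-point) parts or through $D$ along paths not contained in any single part. The paper leaves (i) open precisely because no such argument is available; your proposal identifies the right difficulty but does not overcome it.
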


In fact, by Theorem \ref{aver2}, for any Borel set $F\subset\partial D$,

\begin{align*}
\lim_{\alpha\to 1}\int_{L_F(\alpha)}\tau_\varphi(z) ds(z)=&\lim_{\alpha\to 1}\int_{L_F(\alpha)}\frac{2N_\varphi(z)}{1-|z|^2}ds(z)\\
=&\lim_{\alpha\to1}\frac{2\pi^2}{\sin\pi\alpha}\int_{\alpha}^1\mu_{\varphi}\left(G_F(u)\right)du\\
=&2\pi\mu_\varphi\left(\bigcap_{u=\alpha}^1 G_F(\alpha)\right)\\
=&2\pi\mu_\varphi(\hat{F}),
\end{align*}
where the last equality follows from Proposition \ref{pms}. If the statement (\textit{\romannumeral1}) of this conjecture is true, since $\tau_\varphi$ is upper semi-continuous then by a similar way with the proof of Theorem 7 in \cite{jingxiang}, $\tau_\varphi$ has radial limit almost everywhere on $\partial D$. So
\begin{align*}
\lim_{\alpha\to 1}\int_{L_F(\alpha)}\tau_\varphi(z) ds(z)=\int_F\tau_\varphi(\text{e}^{i\theta}) d\theta.
\end{align*}
As a consequence we have
\begin{align}\label{daoshu}
\int_{\hat{F}}\tau_\varphi d\sigma=\mu_\varphi(\hat{F}).
\end{align}
By Theorem 4.3 of Chapter \uppercase\expandafter{\romannumeral5} in \cite{BAF}, $L^\infty(\mathfrak{M}_{L^{\infty}},d\sigma)=C(\mathfrak{M}_{L^{\infty}}).$  So (\ref{daoshu}) implies that $\tau_\varphi$ equals to $d\mu_\varphi/d\sigma$ as functions in $C(\mathfrak{M}_{L^{\infty}})$. Therefore, if one can prove the first part of this conjecture, then the second part follows directly as a corollary of Theorem \ref{aver2}. Note that the second part of this conjecture gives a fabulous explanation why the Nevanlinna counting function and pull-back measure are so closely related with each other.

\vspace*{1cm}
\textbf{Disclosure statement}

No potential conflict of interest was reported by the author(s).

\end{document}